\newtheorem{thm}{Theorem}[section]
\newtheorem{prop}[thm]{Proposition}
\numberwithin{equation}{section}
\numberwithin{figure}{section}
\DeclareMathOperator*{\ext}{ext}
\title{Stochastic Discrete Hamiltonian Variational Integrators}
\date{}
\author[1]{Darryl D. Holm\thanks{\texttt{d.holm@imperial.ac.uk}}}
\author[1,2]{Tomasz M. Tyranowski\thanks{\texttt{tomasz.tyranowski@ipp.mpg.de}}}
\affil[1]{\small Mathematics Department\authorcr Imperial College London \authorcr London SW7 2AZ, UK}
\affil[2]{\small Max-Planck-Institut f\"ur Plasmaphysik \authorcr Boltzmannstra{\ss}e 2, 85748 Garching, Germany }
\begin{document}

\maketitle

\begin{abstract}
Variational integrators are derived for structure-preserving simulation of stochastic Hamiltonian systems with a certain type of multiplicative noise arising in geometric mechanics. The derivation is based on a stochastic discrete Hamiltonian which approximates a type-II stochastic generating function for the stochastic flow of the Hamiltonian system. The generating function is obtained by introducing an appropriate stochastic action functional and its corresponding variational principle. Our approach permits to recast in a unified framework a number of integrators previously studied in the literature, and presents a general methodology to derive new structure-preserving numerical schemes. The resulting integrators are symplectic; they preserve integrals of motion related to Lie group symmetries; and they include stochastic symplectic Runge-Kutta methods as a special case. Several new low-stage stochastic symplectic methods of mean-square order 1.0 derived using this approach are presented and tested numerically to demonstrate their superior long-time numerical stability and energy behavior compared to nonsymplectic methods.
\end{abstract}

%%%%%%%%%%%%%%%%%%%%%%%%%%%%%%%%%%%%%%%%%%%%%%%%%%%%%%%%%%%%%%%%%%%%%%%%%%%%%%%%%%%%
%  INTRODUCTION
%%%%%%%%%%%%%%%%%%%%%%%%%%%%%%%%%%%%%%%%%%%%%%%%%%%%%%%%%%%%%%%%%%%%%%%%%%%%%%%%%%%%
\section{Introduction}
\label{sec:intro}
Stochastic differential equations (SDEs) play an important role in modeling dynamical systems subject to internal or external random fluctuations. Standard references include \cite{ArnoldSDE}, \cite{IkedaWatanabe1989}, \cite{KloedenPlatenSDE}, \cite{Kunita1997}, \cite{MilsteinBook}, \cite{ProtterStochastic}. Within this class of problems, we are interested in stochastic Hamiltonian systems, which take the form (see \cite{Bismut}, \cite{LaCa-Or2008}, \cite{MilsteinRepin2001})

\begin{align}
\label{eq: Stochastic Hamiltonian system}
dq &= \phantom{-}\frac{\partial H}{\partial p}dt + \frac{\partial h}{\partial p}\circ dW(t), \nonumber \\
dp &= -\frac{\partial H}{\partial q}dt - \frac{\partial h}{\partial q}\circ dW(t),
\end{align}

\noindent
where $H=H(q,p)$ and $h=h(q,p)$ are the Hamiltonian functions, $W(t)$ is the standard one-dimensional Wiener process, and $\circ$ denotes Stratonovich integration. The system \eqref{eq: Stochastic Hamiltonian system} can be formally regarded as a classical Hamiltonian system with the randomized Hamiltonian given by $\widehat H(q,p) = H(q,p) + h(q,p)\circ \dot W$, where $H(q,p)$ is the deterministic Hamiltonian and  $h(q,p)$ is \textit{another} Hamiltonian, to be specified, which multiplies (in the Stratonovich sense, denoted as $\circ$) a one-dimensional Gaussian white noise, $\dot W$. Such systems can be used to model, e.g., mechanical systems with uncertainty, or error, assumed to arise from random forcing, limited precision of experimental measurements, or unresolved physical processes on which the Hamiltonian of the deterministic system might otherwise depend. Particular examples include modeling synchrotron oscillations of particles in particle storage rings (see \cite{SeesselbergParticleStorageRings}, \cite{DomeAccelerators}) and stochastic dynamics of the interactions of singular solutions of the EPDiff basic fluids equation (see \cite{HolmTyranowskiSolitons}). More examples are discussed in Section~\ref{sec:Numerical experiments}. See also \cite{LelievreStoltz2016}, \cite{Mao2007}, \cite{Nelson1988}, \cite{SanzSerna1999}, \cite{Shardlow2003}, \cite{Soize1994}, \cite{Talay2002}.

As occurs for other SDEs, most Hamiltonian SDEs cannot be solved analytically and one must resort to numerical simulations to obtain approximate solutions. In principle, general purpose stochastic numerical schemes for SDEs can be applied to stochastic Hamiltonian systems. However, as for their deterministic counterparts, stochastic Hamiltonian systems possess several important geometric features. In particular, their phase space flows (almost surely) preserve the symplectic structure. When simulating these systems numerically, it is therefore advisable that the numerical scheme also preserves such geometric features. Geometric integration of deterministic Hamiltonian systems has been thoroughly studied (see \cite{HLWGeometric}, \cite{McLachlanQuispel}, \cite{SanzSerna} and the references therein) and symplectic integrators have been shown to demonstrate superior performance in long-time simulations of Hamiltonian systems, compared to non-symplectic methods; so it is natural to pursue a similar approach for stochastic Hamiltonian systems. This is a relatively recent pursuit. Stochastic symplectic integrators were first proposed in \cite{MilsteinRepin2001} and \cite{MilsteinRepin}. Stochastic generalizations of symplectic partitioned Runge-Kutta methods were analyzed in \cite{Burrage2012}, \cite{MaDing2012}, and \cite{MaDing2015}. A stochastic generating function approach to constructing stochastic symplectic methods, based on approximately solving a corresponding stochastic Hamilton-Jacobi equation satisfied by the generating function, was proposed in \cite{WangPHD} and \cite{Wang2014}, and this idea was further pursued in \cite{AntonWeak2014}, \cite{Anton2013}, \cite{AntonHighOrder2014}. Stochastic symplectic integrators constructed via composition methods were proposed and analyzed in \cite{Misawa2010}. A first order weak symplectic numerical scheme and an extrapolation method were proposed and their global error was analyzed in~\cite{AntonGlobalError2013}. More recently, an approach based on Pad\'e approximations has been used to construct stochastic symplectic methods for linear stochastic Hamiltonian systems (see \cite{SunWang2016}). Higher-order strong and weak symplectic partitioned Runge-Kutta methods have been proposed in \cite{Wang2017} and \cite{Zhou2017}. High-order conformal symplectic and ergodic schemes for the stochastic Langevin equation have been introduced in \cite{Hong2017}. Other structure-preserving methods for stochastic Hamiltonian systems have also been investigated, see, e.g., \cite{Anmarkrud2017}, \cite{Burrage2014}, \cite{Hong2015}, and the references therein.

Long-time accuracy and near preservation of the Hamiltonian by symplectic integrators applied to deterministic Hamiltonian systems have been rigorously studied using the so-called backward error analysis (see, e.g., \cite{HLWGeometric} and the references therein). To the best of our knowledge, such rigorous analysis has not been attempted in the stochastic context as yet. However, the numerical evidence presented in the papers cited above is promising and suggests that stochastic symplectic integrators indeed possess the property of very accurately capturing the evolution of the Hamiltonian $H$ over exponentially long time intervals (note that the Hamiltonian $H$ in general does not stay constant for stochastic Hamiltonian systems).

An important class of geometric integrators are \emph{variational integrators}. This type of numerical schemes is based on discrete variational principles and provides a natural framework for the discretization of Lagrangian systems, including forced, dissipative, or constrained ones. These methods have the advantage that they are symplectic, and in the presence of a symmetry, satisfy a discrete version of Noether's theorem. For an overview of variational integration for deterministic systems see \cite{MarsdenWestVarInt}; see also \cite{HallLeokSpectral}, \cite{LeokShingel}, \cite{LeokZhang}, \cite{OberBlobaum2016}, \cite{OberBlobaum2015}, \cite{RowleyMarsden}, \cite{TyranowskiDesbrunLinearLagrangians}, \cite{VankerschaverLeok}. Variational integrators were introduced in the context of finite-dimensional mechanical systems, but were later generalized to Lagrangian field theories (see \cite{MarsdenPatrickShkoller}) and applied in many computations, for example in elasticity, electrodynamics, or fluid dynamics; see \cite{LewAVI}, \cite{Pavlov}, \cite{SternDesbrun}, \cite{TyranowskiDesbrunRAMVI}. 

Stochastic variational integrators were first introduced in \cite{BouRabeeSVI} and further studied in \cite{BouRabeeConstrainedSVI}. However, those integrators were restricted to the special case when the Hamiltonian function $h=h(q)$ was independent of $p$, and only low-order Runge-Kutta types of discretization were considered. In the present work we extend the idea of stochastic variational integration to general stochastic Hamiltonian systems by generalizing the variational principle introduced in \cite{LeokZhang} and applying a Galerkin type of discretization (see \cite{MarsdenWestVarInt}, \cite{LeokShingel}, \cite{LeokZhang}, \cite{OberBlobaum2015}, \cite{OberBlobaum2016}), which leads to a more general class of stochastic symplectic integrators than those presented in \cite{BouRabeeConstrainedSVI}, \cite{BouRabeeSVI}, \cite{MaDing2012}, and \cite{MaDing2015}. Our approach consists in approximating a generating function for the stochastic flow of the Hamiltonian system, but unlike in \cite{WangPHD} and \cite{Wang2014}, we do make this discrete approximation by exploiting its variational characterization, rather than solving the corresponding Hamilton-Jacobi equation.

\paragraph{Main content}
The main content of the remainder of this paper is, as follows. 
\begin{description}
\item
In Section~\ref{sec:Variational principle for stochastic Hamiltonian systems} we introduce a stochastic variational principle and a stochastic generating function suitable for considering stochastic Hamiltonian systems, and we discuss their properties. 
\item
In Section~\ref{sec:Stochastic Galerkin Hamiltonian Variational Integrators} we present a general framework for constructing stochastic Galerkin variational integrators, prove the symplecticity and conservation properties of such integrators, show they contain the stochastic symplectic Runge-Kutta methods of \cite{MaDing2012}, \cite{MaDing2015} as a special case, and finally present several particularly interesting examples of new low-stage stochastic symplectic integrators of mean-square order $1.0$ derived with our general methodology. 
\item
In Section~\ref{sec:Numerical experiments} we present the results of our numerical tests, which verify the theoretical convergence rates and the excellent long-time performance of our integrators compared to some popular non-symplectic methods. 
\item
Section~\ref{sec:Summary} contains the summary of our work.
\end{description}

%%%%%%%%%%%%%%%%%%%%%%%%%%%%%%%%%%%%%%%%%%%%%%%%%%%%%%%%%%%%%%%%%%%%%%%%%%%%%%%%%%%%
%  VARIATIONAL PRINCIPLE
%%%%%%%%%%%%%%%%%%%%%%%%%%%%%%%%%%%%%%%%%%%%%%%%%%%%%%%%%%%%%%%%%%%%%%%%%%%%%%%%%%%%
\section{Variational principle for stochastic Hamiltonian systems}
\label{sec:Variational principle for stochastic Hamiltonian systems}

The stochastic variational integrators proposed in \cite{BouRabeeSVI} and \cite{BouRabeeConstrainedSVI} were formulated for dynamical systems which are described by a Lagrangian and which are subject to noise whose magnitude depends only on the position $q$. Therefore, these integrators are applicable to \eqref{eq: Stochastic Hamiltonian system} only when the Hamiltonian function $h=h(q)$ is independent of $p$ and the Hamiltonian $H$ is non-degenerate (i.e., the associated Legendre transform is invertible). However, in the case of general $h=h(q,p)$ the paths $q(t)$ of the system become almost surely nowhere differentiable, which poses a difficulty in interpreting the meaning of the corresponding Lagrangian. Therefore, we need a different sort of action functional and variational principle to construct stochastic symplectic integrators for \eqref{eq: Stochastic Hamiltonian system}. To this end, we will generalize the approach taken in \cite{LeokZhang}. To begin, in the next section, we will introduce an appropriate stochastic action functional and show that it can be used to define a type-II generating function for the stochastic flow of the system \eqref{eq: Stochastic Hamiltonian system}.

\subsection{Stochastic variational principle}
\label{sec:Stochastic variational principle}

Let the Hamiltonian functions $H: T^*Q \longrightarrow \mathbb{R}$ and $h: T^*Q \longrightarrow \mathbb{R}$ be defined on the cotangent bundle $T^*Q$ of the configuration manifold $Q$, and let $(q,p)$ denote the canonical coordinates on $T^*Q$. For simplicity, in this work we assume that the configuration manifold has a vector space structure, $Q \cong \mathbb{R}^N$, so that $T^*Q = Q \times Q^* \cong \mathbb{R}^N \times \mathbb{R}^N$ and $TQ = Q \times Q \cong \mathbb{R}^N \times \mathbb{R}^N$. In this case, the natural pairing between one-forms and vectors can be identified with the scalar product on $\mathbb{R}^N$, that is, $\langle (q,p), (q,\dot q) \rangle = p\cdot \dot q$, where $(q,\dot q)$ denotes the coordinates on $TQ$ . Let $(\Omega, \mathcal{F},\mathbb{P})$ be the probability space with the filtration $\{\mathcal{F}_t\}_{t \geq 0}$, and let $W(t)$ denote a standard one-dimensional Wiener process on that probability space (such that $W(t)$ is $\mathcal{F}_t$-measurable). We will assume that the Hamiltonian functions $H$ and $h$ are sufficiently smooth and satisfy all the necessary conditions for the existence and uniqueness of solutions to \eqref{eq: Stochastic Hamiltonian system}, and their extendability to a given time interval $[t_a, t_b]$ with $t_b > t_a \geq 0$. One possible set of such assumptions can be formulated by considering the It\^o form of~\eqref{eq: Stochastic Hamiltonian system},

\begin{equation}
\label{eq: Ito form of the stochastic Hamiltonian system}
dz = A(z) dt + B(z)dW(t),
\end{equation}

\noindent
with $z=(q,p)$ and

\begin{align}
\label{eq: Ito coefficients}
A(z) =
\begin{pmatrix}
 \phantom{-}\frac{\partial H}{\partial p}+\frac{1}{2}\frac{\partial^2 h}{\partial p \partial q} \frac{\partial h}{\partial p}-\frac{1}{2}\frac{\partial^2 h}{\partial p^2} \frac{\partial h}{\partial q}  \\
-\frac{\partial H}{\partial q}-\frac{1}{2}\frac{\partial^2 h}{\partial q^2} \frac{\partial h}{\partial p}+\frac{1}{2}\frac{\partial^2 h}{\partial q \partial p} \frac{\partial h}{\partial q}
\end{pmatrix},
\qquad\qquad B(z) =
\begin{pmatrix}
 \phantom{-}\frac{\partial h}{\partial p} \\
-\frac{\partial h}{\partial q}
\end{pmatrix},
\end{align}

\noindent
where $\partial^2 h/\partial q^2$, $\partial^2 h/\partial p^2$, and $\partial^2 h/\partial q \partial p$ denote the Hessian matrices of $h$. For simplicity and clarity of the exposition, throughout this paper we assume that (see \cite{ArnoldSDE}, \cite{IkedaWatanabe1989}, \cite{KloedenPlatenSDE}, \cite{Kunita1997})

\begin{itemize}
	\item[(H1)] $H$ and $h$ are $C^2$ functions of their arguments
	\item[(H2)] $A$ and $B$ are globally Lipschitz
\end{itemize}

\noindent
These assumptions are sufficient\footnote{In this work we only consider Hamiltonian functions $H$ and $h$ that are independent of time. In the time-dependent case one needs to add a further assumption that the growth of $A$ and $B$ is linearly bounded, i.e. $\|A(z,t)\|^2+\|B(z,t)\|^2 \leq K (1 + \|z\|^2)$ for a constant $K>0$ (see \cite{ArnoldSDE}, \cite{IkedaWatanabe1989}, \cite{KloedenPlatenSDE}, \cite{Kunita1997}).} for our purposes, but could be relaxed if necessary. Define the space

\begin{equation}
\label{eq:Definition of the q space}
C([t_a, t_b]) = \big\{ (q,p):\Omega \times [t_a, t_b] \longrightarrow T^*Q  \, \big| \, \text{$q$, $p$ are almost surely continuous $\mathcal{F}_t$-adapted semimartingales} \big\}.
\end{equation}

\noindent
Since we assume $T^*Q \cong \mathbb{R}^N \times \mathbb{R}^N$, the space $C([t_a, t_b])$ is a vector space (see \cite{ProtterStochastic}). Therefore, we can identify the tangent space $TC([t_a, t_b]) \cong C([t_a, t_b])\times C([t_a, t_b])$. We can now define the following stochastic action functional, $\mathcal{B}: \Omega \times C([t_a, t_b]) \longrightarrow \mathbb{R}$,

\begin{equation}
\label{eq:Stochastic action functional}
\mathcal{B}\big[q(\cdot),p(\cdot) \big] = p(t_b)q(t_b) - \int_{t_a}^{t_b} \Big[ p\circ dq - H\big(q(t),p(t)\big)\,dt - h\big(q(t),p(t)\big)\circ dW(t)\Big],
\end{equation}

\noindent
where $\circ$ denotes Stratonovich integration, and we have omitted writing the elementary events $\omega \in \Omega$ as arguments of functions, following the standard convention in stochastic analysis. 

\begin{thm}[{\bf Stochastic Variational Principle in Phase Space}]
\label{thm:Stochastic Variational Principle}
Suppose that $H(q,p)$ and $h(q,p)$ satisfy conditions (H1)-(H2). If the curve $\big( q(t), p(t) \big)$ in $T^*Q$ satisfies the stochastic Hamiltonian system \eqref{eq: Stochastic Hamiltonian system} for $t\in [t_a,t_b]$, where $t_b \geq t_a >0$, then the pair $\big( q(\cdot), p(\cdot) \big)$ is a critical point of the stochastic action functional \eqref{eq:Stochastic action functional}, that is,

\begin{equation}
\label{eq:Stochastic Variational Principle}
\delta \mathcal{B}\big[q(\cdot), p(\cdot) \big] \equiv \frac{d}{d\epsilon} \bigg|_{\epsilon=0}\mathcal{B}\big[q(\cdot)+\epsilon \delta q(\cdot), p(\cdot)+\epsilon \delta p(\cdot) \big]=0\,,
\end{equation} 

\noindent
almost surely for all variations $\big(\delta q(\cdot), \delta p(\cdot) \big) \in C([t_a, t_b])$ such that almost surely $\delta q(t_a)=0$ and $\delta p(t_b)=0$.
\end{thm}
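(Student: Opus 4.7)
\smallskip
\noindent\textbf{Proof plan.} The plan is to compute $\delta \mathcal{B}$ directly, using the Stratonovich calculus rules which formally mimic the classical Leibniz rule, and then to substitute the Hamiltonian equations~\eqref{eq: Stochastic Hamiltonian system} to conclude that the resulting expression vanishes almost surely. Since the admissible variations $(\delta q, \delta p)$ are themselves elements of $C([t_a,t_b])$, they are almost surely continuous $\mathcal{F}_t$-adapted semimartingales, so all Stratonovich integrals appearing below are well-defined and the Stratonovich chain rule and product rule are applicable.

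First I would differentiate each term of $\mathcal{B}[q+\epsilon\delta q, p+\epsilon\delta p]$ with respect to $\epsilon$ at $\epsilon=0$. For the deterministic and noise integrands involving $H$ and $h$, the standard $\partial_q$, $\partial_p$ derivatives appear by linearity and smoothness (H1). For the kinematic term $\int_{t_a}^{t_b} p \circ dq$, the Stratonovich pairing is bilinear in the two semimartingale factors, and a short computation (together with differentiating through the stochastic integral, which is justified because $(q,p,\delta q,\delta p)$ are semimartingales and $\epsilon$ enters linearly) yields
\begin{equation*}
\delta \int_{t_a}^{t_b} p \circ dq \;=\; \int_{t_a}^{t_b} \delta p \circ dq \;+\; \int_{t_a}^{t_b} p \circ d(\delta q).
\end{equation*}
For the boundary term $p(t_b)q(t_b)$ I simply get $\delta p(t_b)\,q(t_b) + p(t_b)\,\delta q(t_b)$.

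The central step is Stratonovich integration by parts. Since $p$ and $\delta q$ are both continuous semimartingales, the Stratonovich product rule $d(p\,\delta q) = p\circ d(\delta q) + \delta q \circ dp$ gives
\begin{equation*}
\int_{t_a}^{t_b} p \circ d(\delta q) \;=\; p(t_b)\delta q(t_b) - p(t_a)\delta q(t_a) \;-\; \int_{t_a}^{t_b} \delta q \circ dp,
\end{equation*}
and the endpoint condition $\delta q(t_a)=0$ kills the boundary contribution at $t_a$. Collecting all contributions to $\delta \mathcal{B}$, the boundary pieces $p(t_b)\delta q(t_b)$ cancel, and the remaining boundary term $\delta p(t_b)\,q(t_b)$ vanishes by the condition $\delta p(t_b)=0$. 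What remains is
\begin{equation*}
\delta \mathcal{B} \;=\; \int_{t_a}^{t_b} \delta q \circ \Bigl( dp + \tfrac{\partial H}{\partial q}\,dt + \tfrac{\partial h}{\partial q}\circ dW \Bigr) \;-\; \int_{t_a}^{t_b} \delta p \circ \Bigl( dq - \tfrac{\partial H}{\partial p}\,dt - \tfrac{\partial h}{\partial p}\circ dW \Bigr).
\end{equation*}

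Finally, both Stratonovich integrands vanish almost surely along any solution of~\eqref{eq: Stochastic Hamiltonian system}, so $\delta \mathcal{B}=0$ almost surely for every admissible variation. The main technical obstacle is the careful handling of the Stratonovich kinematic term: one must justify differentiation under the stochastic integral sign and rely on the fact that the Stratonovich product rule (unlike the It\^o one) has no quadratic-variation correction, which is precisely what makes the classical-looking integration by parts legitimate in the stochastic setting. Everything else is a direct, almost deterministic-looking manipulation.
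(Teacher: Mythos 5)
Your proposal is correct and follows essentially the same route as the paper: differentiate under the integral sign, apply the Stratonovich (semimartingale) integration by parts to the kinematic term, use the endpoint conditions to kill the boundary contributions, and substitute the equations of motion. The only place where the paper is more explicit is the final step: to justify that $\int_{t_a}^{t_b}\delta p\circ\bigl(dq-\tfrac{\partial H}{\partial p}\,dt-\tfrac{\partial h}{\partial p}\circ dW\bigr)=0$, it decomposes $q(t)=q(t_a)+M_1(t)+M_2(t)$ into its absolutely continuous and Stratonovich parts and invokes the associativity property of the Stratonovich integral (together with the Riemann--Stieltjes property for the differentiable part), a point you assert implicitly when saying the ``integrands vanish along any solution.''
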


\begin{proof}
Let the curve $\big( q(t), p(t) \big)$ in $T^*Q$ satisfy \eqref{eq: Stochastic Hamiltonian system} for $t\in [t_a,t_b]$. It then follows that the stochastic processes $q(t)$ and $p(t)$ are almost surely continuous, $\mathcal{F}_t$-adapted semimartingales, that is, $\big( q(\cdot), p(\cdot) \big) \in C([t_a, t_b])$ (see \cite{ArnoldSDE}, \cite{ProtterStochastic}). We calculate the variation \eqref{eq:Stochastic Variational Principle} as

\begin{align}
\label{eq:Calculating delta B}
\delta \mathcal{B}\big[q(\cdot), p(\cdot) \big] &= p(t_b)\delta q(t_b) - \int_{t_a}^{t_b} p(t)\circ d \delta q(t) - \int_{t_a}^{t_b} \delta p(t)\circ d q(t) \nonumber \\
                                       &\phantom{=} + \int_{t_a}^{t_b} \bigg[ \frac{\partial H}{\partial q}\big(q(t),p(t)\big)\,\delta q(t) + \frac{\partial H}{\partial p}\big(q(t),p(t)\big)\,\delta p(t) \bigg]\,dt \nonumber \\
                                       &\phantom{=} + \int_{t_a}^{t_b} \bigg[ \frac{\partial h}{\partial q}\big(q(t),p(t)\big)\,\delta q(t) + \frac{\partial h}{\partial p}\big(q(t),p(t)\big)\,\delta p(t) \bigg]\circ dW(t),
\end{align}

\noindent
where we have used the end point condition, $\delta p(t_b)=0$. Since the Hamiltonians are $C^2$ and the processes $q(t)$, $p(t)$ are almost surely continuous, in the last two lines we have used a dominated convergence argument to interchange differentiation with respect to $\epsilon$ and integration with respect to $t$ and $W(t)$. Upon applying the integration by parts formula for semimartingales (see \cite{ProtterStochastic}), we find

\begin{equation}
\label{eq:Integration by parts for semimartingales}
\int_{t_a}^{t_b} p(t)\circ d \delta q(t) = p(t_b)\delta q(t_b) -  p(t_a)\delta q(t_a) - \int_{t_a}^{t_b} \delta q(t)\circ d p(t).
\end{equation}

\noindent
Substituting and rearranging terms produces,

\begin{align}
\label{eq:Calculating delta B continued}
\delta \mathcal{B}\big[q(\cdot), p(\cdot) \big] &= \int_{t_a}^{t_b} \delta q(t) \bigg[\circ dp(t) + \frac{\partial H}{\partial q}\big(q(t),p(t)\big)\,dt + \frac{\partial h}{\partial q}\big(q(t),p(t)\big)\circ dW(t) \bigg] \nonumber \\
&- \int_{t_a}^{t_b} \delta p(t) \bigg[\circ dq(t) - \frac{\partial H}{\partial p}\big(q(t),p(t)\big)\,dt - \frac{\partial h}{\partial p}\big(q(t),p(t)\big)\circ dW(t) \bigg],
\end{align}

\noindent
where we have used $\delta q(t_a)=0$. Since $\big( q(t), p(t) \big)$ satisfy \eqref{eq: Stochastic Hamiltonian system}, then by definition we have that almost surely for all $t \in [t_a,t_b]$,

\begin{equation}
\label{eq: Integral form of the solution of the stochastic Hamiltonian system}
q(t) = q(t_a) + \underbrace{\int_{t_a}^t \frac{\partial H}{\partial p}(q(s),p(s)) \,ds}_{M_1(t)} + \underbrace{\int_{t_a}^t \frac{\partial h}{\partial p}(q(s),p(s)) \circ dW(s)}_{M_2(t)},
\end{equation}

\noindent
that is, $q(t)$ can be represented as the sum of two semi-martingales $M_1(t)$ and $M_2(t)$, where the sample paths of the process $M_1(t)$ are almost surely continuously differentiable. Let us calculate

\begin{align}
\label{eq: One of the integrals in delta B}
\int_{t_a}^{t_b} \delta p(t)\circ dq(t) &= \int_{t_a}^{t_b} \delta p(t)\circ d\big(q(t_a)+M_1(t)+M_2(t)\big) \nonumber \\
                                        &= \int_{t_a}^{t_b} \delta p(t)\circ dM_1(t) + \int_{t_a}^{t_b} \delta p(t)\circ dM_2(t) \nonumber \\
																				&= \int_{t_a}^{t_b} \delta p(t)\frac{\partial H}{\partial p}(q(t),p(t)) \,dt + \int_{t_a}^{t_b}                                            \delta p(t) \frac{\partial h}{\partial p}(q(t),p(t)) \circ dW(t),
\end{align}

\noindent
where in the last equality we have used the standard property of the Riemann-Stieltjes integral for the first term, as $M_1(t)$ is almost surely differentiable, and the associativity property of the Stratonovich integral for the second term (see \cite{ProtterStochastic}, \cite{IkedaWatanabe1989}). Substituting \eqref{eq: One of the integrals in delta B} in \eqref{eq:Calculating delta B continued}, we show that the second term is equal to zero. By a similar argument we also prove that the first term in \eqref{eq:Calculating delta B continued} is zero. Therefore, $\delta \mathcal{B} = 0$, almost surely.\\
\end{proof}

\noindent
{\bf Remark:} It is natural to expect that the converse theorem, that is, if $\big( q(\cdot), p(\cdot) \big)$ is a critical point of the stochastic action functional \eqref{eq:Stochastic action functional}, then the curve $\big( q(t), p(t) \big)$ satisfies \eqref{eq: Stochastic Hamiltonian system}, should also hold, although a larger class of variations $(\delta q, \delta p)$ may be necessary. A variant of such a theorem, although for a slightly different variational principle and in a different setting, was proved in L\'azaro-Cam\'i \& Ortega \cite{LaCa-Or2008}. Another variant for Lagrangian systems was proved by Bou-Rabee \& Owhadi \cite{BouRabeeSVI} in the special case when $h=h(q)$ is independent of $p$. In that case, one can assume that $q(t)$ is continuously differentiable. In the general case, however, $q(t)$ is not differentiable, and the ideas of \cite{BouRabeeSVI} cannot be applied directly. We leave this as an open question. Here, we will use the action functional \eqref{eq:Stochastic action functional} and the variational principle \eqref{eq:Stochastic Variational Principle} to construct numerical schemes, and we will directly verify that these numerical schemes converge to solutions of \eqref{eq: Stochastic Hamiltonian system}.\\

\subsection{Stochastic type-II generating function}
\label{sec:Stochastic type-II generating function}

When the Hamiltonian functions $H(q,p)$ and $h(q,p)$ satisfy standard measurability and regularity conditions (e.g., (H1)-(H2)), then the system \eqref{eq: Stochastic Hamiltonian system} possesses a pathwise unique stochastic flow $F_{t,t_0}: \Omega \times T^*Q \longrightarrow T^*Q$. It can be proved that for fixed $t,t_0$ this flow is mean-square differentiable with respect to the $q$, $p$ arguments, and is also almost surely a diffeomorphism (see \cite{ArnoldSDE}, \cite{IkedaWatanabe1989}, \cite{KloedenPlatenSDE}, \cite{Kunita1997}). Moreover, $F_{t,t_0}$ almost surely preserves the canonical symplectic form $\Omega_{T^*Q} = \sum_{i=1}^N dq^i \wedge dp^i$ (see \cite{MilsteinRepin}, \cite{Bismut}, \cite{LaCa-Or2008}), that is,

\begin{equation}
\label{eq:Symplecticity of the Hamiltonian flow}
F^*_{t,t_0} \Omega_{T^*Q} = \Omega_{T^*Q},
\end{equation}

\noindent
where $F^*_{t,t_0}$ denotes the pull-back by the flow $F_{t,t_0}$. We will show below that the action functional \eqref{eq:Stochastic action functional} can be used to construct a type II generating function for $F_{t,t_0}$. Let $(\bar q(t), \bar p(t))$ be a particular solution of \eqref{eq: Stochastic Hamiltonian system} on $[t_a,t_b]$. Suppose that for almost all $\omega \in \Omega$ there is an open neighborhood $\mathcal{U}(\omega)\subset Q$ of $\bar q(\omega,t_a)$, an open neighborhood $\mathcal{V}(\omega)\subset Q^*$ of $\bar p(\omega,t_b)$, and an open neighborhood $\mathcal{W}(\omega)\subset T^*Q$ of the curve $(\bar q(\omega,t), \bar p(\omega,t))$ such that for all $q_a \in \mathcal{U}(\omega)$ and $p_b \in \mathcal{V}(\omega)$ there exists a pathwise unique solution $(\bar q(\omega,t; q_a, p_b), \bar p(\omega,t; q_a, p_b))$ of \eqref{eq: Stochastic Hamiltonian system} which satisfies $\bar q(\omega,t_a; q_a, p_b)=q_a$, $\bar p(\omega,t_b; q_a, p_b)=p_b$, and $(\bar q(\omega,t; q_a, p_b), \bar p(\omega,t; q_a, p_b)) \in \mathcal{W}(\omega)$ for $t_a\leq t \leq t_b$. (As in the deterministic case, for $t_b$ sufficiently close to $t_a$ one can argue that such neighborhoods exist; see \cite{MarsdenRatiuSymmetry}.) Define the function $S:\mathcal{Y} \longrightarrow \mathbb{R}$ as

\begin{equation}
\label{eq:Definition of the generating function}
S(q_a,p_b) = \mathcal{B}\big[\bar q(\cdot;q_a,p_b), \bar p(\cdot; q_a,p_b) \big],
\end{equation}

\noindent
where the domain $\mathcal{Y} \subset \Omega \times T^*Q$ is given by $\mathcal{Y} = \bigcup\limits_{\omega \in \Omega} \{\omega \} \times \mathcal{U}(\omega) \times \mathcal{V}(\omega)$. Below we prove that $S$ generates\footnote{A generating function for the symplectic transformation $(q_a,p_a)\longrightarrow (q_b,p_b)$ is a function of one of the variables $(q_a,p_a)$ and one of the variables $(q_b,p_b)$. Therefore, there are four basic types of generating functions: $S=S_1(q_a, q_b)$, $S=S_2(q_a, p_b)$, $S=S_3(p_a, q_b)$, and $S=S_4(p_a, p_b)$. In this work we use the type-II generating function $S=S_2(q_a, p_b)$.} the stochastic flow $F_{t_b,t_a}$. 

\begin{thm}
\label{thm:S generates the Hamiltonian flow}
The function $S(q_a,p_b)$ is a type-II stochastic generating function for the stochastic mapping $F_{t_b,t_a}$, that is, $F_{t_b,t_a}:(q_a,p_a)\longrightarrow (q_b,p_b)$ is implicitly given by the equations

\begin{equation}
\label{eq:Equations generating the flow of the Hamiltonian system}
q_b = D_2 S(q_a,p_b), \qquad\qquad p_a = D_1 S(q_a,p_b),
\end{equation}

\noindent
where the derivatives are understood in the mean-square sense.
\end{thm}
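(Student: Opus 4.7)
The plan is to imitate the classical deterministic derivation of type-II generating functions: compute $D_1 S$ and $D_2 S$ by perturbing the boundary data $(q_a,p_b)$ and exploiting the fact that the variational calculation of Theorem~\ref{thm:Stochastic Variational Principle} already collapses every interior integral to zero on solutions of \eqref{eq: Stochastic Hamiltonian system}. The only inputs not present in that proof are the boundary contributions that were previously killed by the conditions $\delta q(t_a)=0$, $\delta p(t_b)=0$; these boundary contributions will carry the entire variation of $S$ and will identify $D_1 S=p_a$, $D_2 S=q_b$.

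First I would fix $\omega$ in a set of full probability and $(q_a,p_b)\in\mathcal{U}(\omega)\times\mathcal{V}(\omega)$, and for $(\delta q_a,\delta p_b)\in\mathbb{R}^N\times\mathbb{R}^N$ consider the perturbed two-point boundary-value problem for \eqref{eq: Stochastic Hamiltonian system} with data $(q_a+\epsilon\delta q_a,\,p_b+\epsilon\delta p_b)$. For $|\epsilon|$ small the hypotheses on $\mathcal{U},\mathcal{V},\mathcal{W}$ guarantee a pathwise unique solution $(\bar q_\epsilon,\bar p_\epsilon)\in\mathcal{W}(\omega)$, so $S(q_a+\epsilon\delta q_a,p_b+\epsilon\delta p_b)=\mathcal{B}[\bar q_\epsilon,\bar p_\epsilon]$. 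Setting $\delta q(t):=\partial_\epsilon \bar q_\epsilon(t)|_{\epsilon=0}$ and $\delta p(t):=\partial_\epsilon \bar p_\epsilon(t)|_{\epsilon=0}$, mean-square differentiability of the perturbed family (see obstacle below) places $(\delta q,\delta p)$ in $C([t_a,t_b])$ with $\delta q(t_a)=\delta q_a$ and $\delta p(t_b)=\delta p_b$. I then re-run the computation \eqref{eq:Calculating delta B}--\eqref{eq:Calculating delta B continued} \emph{verbatim}, but now without imposing the homogeneous boundary conditions. Varying the boundary term $p(t_b)q(t_b)$ produces $\delta p(t_b)q(t_b)+p(t_b)\delta q(t_b)$, and the integration-by-parts identity \eqref{eq:Integration by parts for semimartingales} retains the $p(t_a)\delta q(t_a)$ endpoint. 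Using \eqref{eq: One of the integrals in delta B} and the fact that $(\bar q,\bar p)$ satisfies \eqref{eq: Stochastic Hamiltonian system} causes both bracketed integrands in \eqref{eq:Calculating delta B continued} to vanish almost surely, leaving
\[
\frac{d}{d\epsilon}\bigg|_{\epsilon=0}S(q_a+\epsilon\delta q_a,\,p_b+\epsilon\delta p_b)\;=\;p(t_a)\,\delta q_a + q(t_b)\,\delta p_b\;=\;p_a\,\delta q_a + q_b\,\delta p_b,
\]
where $(q_b,p_a)$ are determined by $F_{t_b,t_a}(q_a,p_a)=(q_b,p_b)$ as in the setup. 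Arbitrariness and independence of $\delta q_a$ and $\delta p_b$ then identify $D_1 S(q_a,p_b)=p_a$ and $D_2 S(q_a,p_b)=q_b$, which is precisely the statement that $S$ generates $F_{t_b,t_a}$ in type-II form.

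The principal obstacle, and the step that is genuinely new in the stochastic setting, is justifying (i) that $(\bar q_\epsilon,\bar p_\epsilon)$ depends differentiably on $(q_a,p_b)$ in the mean-square sense required by the theorem, and (ii) that this differentiation may be interchanged with the Stratonovich integrals defining $\mathcal{B}$. For (i), I would combine the known mean-square differentiability of the forward stochastic flow $F_{t,t_a}$ with respect to initial data (quoted just before the statement) with an implicit-function argument applied to the shooting map $p_a\mapsto\bar p(t_b;q_a,p_a)$, whose linearisation is invertible on $\mathcal{U}\times\mathcal{V}$ because $F_{t_b,t_a}$ is almost surely a diffeomorphism; this converts forward-flow differentiability into differentiability of the boundary-data map $(q_a,p_b)\mapsto(\bar q_\epsilon,\bar p_\epsilon)$. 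For (ii), hypotheses (H1)--(H2) together with the almost sure continuity of the processes provide the $L^2$ domination needed to apply exactly the same dominated-convergence reasoning used in the proof of Theorem~\ref{thm:Stochastic Variational Principle} to move $d/d\epsilon$ inside $dt$- and $\circ dW$-integrals. Once (i) and (ii) are in place, the displayed identity holds in the mean-square sense stipulated in the statement.
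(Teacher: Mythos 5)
Your proposal is correct and follows essentially the same route as the paper's own proof: the paper likewise differentiates $S$ under the integral with respect to the boundary data (its $\partial\bar q(t)/\partial q_a$, $\partial\bar p(t)/\partial q_a$ are exactly your $\delta q$, $\delta p$ with inhomogeneous endpoint values), applies the semimartingale integration-by-parts formula, and lets the interior integrals vanish on solutions of \eqref{eq: Stochastic Hamiltonian system}, leaving only the boundary terms $p_a\,\delta q_a + q_b\,\delta p_b$. Your additional shooting-map/implicit-function discussion of why the two-point boundary-value solution is mean-square differentiable in $(q_a,p_b)$ is a more explicit justification of a step the paper simply asserts with a citation.
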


\begin{proof}
Under appropriate regularity assumptions on the Hamiltonians (e.g., (H1)-(H2)), the solutions $\bar q(t; q_a, p_b)$ and $\bar p(t; q_a, p_b)$ are mean-square differentiable with respect to the parameters $q_a$ and $p_b$, and the partial derivatives are semimartingales (see \cite{ArnoldSDE}). We calculate the derivative of $S$ as

\begin{align}
\label{eq:Calculate derivative of S}
\frac{\partial S}{\partial q_a}(q_a,p_b) &= p_b \frac{\partial \bar q(t_b)}{\partial q_a} - \int_{t_a}^{t_b} \frac{\partial \bar p(t)}{\partial q_a} \circ d \bar q(t)  - \int_{t_a}^{t_b} \bar p(t) \circ d \frac{\partial \bar q(t)}{\partial q_a} \nonumber \\
&\phantom{=} + \int_{t_a}^{t_b} \bigg[ \frac{\partial \bar q(t)}{\partial q_a} \frac{\partial H}{\partial q} \big(\bar q(t), \bar p(t) \big) + \frac{\partial \bar p(t)}{\partial q_a} \frac{\partial H}{\partial p} \big(\bar q(t), \bar p(t) \big) \bigg]\,dt \nonumber \\
&\phantom{=} + \int_{t_a}^{t_b} \bigg[ \frac{\partial \bar q(t)}{\partial q_a} \frac{\partial h}{\partial q} \big(\bar q(t), \bar p(t) \big) + \frac{\partial \bar p(t)}{\partial q_a} \frac{\partial h}{\partial p} \big(\bar q(t), \bar p(t) \big) \bigg]\circ dW(t),
\end{align}

\noindent
where for notational convenience we have omitted writing $q_a$ and $p_b$ explicitly as arguments of $\bar q(t)$ and $\bar p(t)$. Applying the integration by parts formula for semimartingales (see \cite{ProtterStochastic}), we find

\begin{equation}
\label{eq:Integration by parts for semimartingales 2}
\int_{t_a}^{t_b} \bar p(t) \circ d \frac{\partial \bar q(t)}{\partial q_a} = p_b\frac{\partial \bar q(t_b)}{\partial q_a} - \bar p(t_a) - \int_{t_a}^{t_b} \frac{\partial \bar q(t)}{\partial q_a} \circ d \bar p(t). 
\end{equation}

\noindent
Substituting and rearranging terms, we obtain the result,

\begin{align}
\label{eq:Calculate derivative of S continued}
\frac{\partial S}{\partial q_a}(q_a,p_b) = \bar p(t_a) &+ \int_{t_a}^{t_b}\frac{\partial \bar q(t)}{\partial q_a} \bigg[\circ d \bar p +  \frac{\partial H}{\partial q} \big(\bar q(t), \bar p(t) \big)\,dt + \frac{\partial h}{\partial q} \big(\bar q(t), \bar p(t) \big)\circ dW(t) \bigg] \nonumber \\
&+ \int_{t_a}^{t_b}\frac{\partial \bar p(t)}{\partial q_a} \bigg[\circ d \bar q -  \frac{\partial H}{\partial p} \big(\bar q(t), \bar p(t) \big)\,dt - \frac{\partial h}{\partial p} \big(\bar q(t), \bar p(t) \big)\circ dW(t) \bigg] 
\nonumber\\&= \bar p(t_a),
\end{align}

\noindent
since $(\bar q(t), \bar p(t))$ is a solution of \eqref{eq: Stochastic Hamiltonian system}. Similarly we show $\partial S / \partial p_b (q_a,p_b) = \bar q(t_b)$. By definition of the flow, then $F_{t_b,t_a}(q_a, \bar p(t_a)) = (\bar q(t_b), p_b)$.

\end{proof}

\noindent
We can consider $S(q_a,p_b)$ as a function of time if we let $t_b$ vary. Let us denote this function as $S_t(q_a,p)$. Below we show that $S_t(q_a,p)$ satisfies a certain stochastic partial differential equation, which is a stochastic generalization of the Hamilton-Jacobi equation considered in \cite{LeokZhang}.

\begin{prop}[{\bf Type II Stochastic Hamilton-Jacobi Equation}]
\label{thm:Type II Stochastic Hamilton-Jacobi equation}
Let the time-dependent type-II generating function be defined as

\begin{equation}
\label{eq:Type II time-dependent generating function}
S_2(q_a,p,t) \equiv S_t(q_a,p) = p \bar q(t) - \int_{t_a}^{t} \Big[ \bar p(\tau) \circ d\bar q(\tau) - H\big(\bar q(\tau),\bar p(\tau)\big)\,d\tau - h\big(\bar q(\tau),\bar p(\tau)\big)\circ dW(\tau)\Big],
\end{equation}

\noindent
where $\bar q(\tau) \equiv \bar q(\tau; q_a, p)$ and $\bar p(\tau) \equiv \bar p(\tau; q_a, p)$ as before. Then the function $S_2(q_a,p,t)$ satisfies the following stochastic partial differential equation

\begin{equation}
\label{eq:Type II Stochastic Hamilton-Jacobi equation}
d S_2 = H\Big(\frac{\partial S_2}{\partial p}, p \Big)\,dt + h\Big(\frac{\partial S_2}{\partial p}, p \Big)\circ dW(t),
\end{equation}

\noindent
where $dS_2$ denotes the stochastic differential of $S_2$ with respect to the $t$ variable.
\end{prop}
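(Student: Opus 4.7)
The plan is to trace $S_2$ along a specific Hamiltonian trajectory so that the boundary datum $p$ becomes a dynamical quantity and the time evolution of $S_2$ can be computed by direct stochastic calculus. Fix $q_a$ and an arbitrary initial momentum $p_a$, and let $\big(Q(\tau),P(\tau)\big)$ denote the forward Hamiltonian flow of \eqref{eq: Stochastic Hamiltonian system} with $Q(t_a)=q_a$, $P(t_a)=p_a$. For each $t\geq t_a$, the two-point problem defining $\bar q,\bar p$ with boundary data $(q_a,P(t))$ is solved by $(Q,P)$ on $[t_a,t]$, so
\[
S_2\big(q_a,P(t),t\big) = P(t)Q(t) - \int_{t_a}^{t}\bigl[P(\tau)\circ dQ(\tau) - H(Q,P)\,d\tau - h(Q,P)\circ dW(\tau)\bigr].
\]

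First, I would differentiate the right-hand side as a semimartingale in $t$. The Stratonovich product rule gives $d[PQ]=Q\circ dP+P\circ dQ$, while the fundamental theorem of stochastic calculus evaluates the integral term at its upper limit. The $P\circ dQ$ contributions cancel, leaving
\[
d\bigl[S_2(q_a,P(t),t)\bigr] = Q(t)\circ dP(t) + H(Q,P)\,dt + h(Q,P)\circ dW(t).
\]
Second, I would expand the same differential via the Stratonovich chain rule applied to the composition of $S_2(q_a,\cdot,\cdot)$ with the semimartingale $t\mapsto(P(t),t)$, which yields $(\partial S_2/\partial p)\circ dP(t) + d_t S_2(q_a,p,t)\big|_{p=P(t)}$, where $d_t S_2$ denotes the stochastic differential in $t$ at fixed $(q_a,p)$ as in the statement. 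Theorem~\ref{thm:S generates the Hamiltonian flow} supplies $\partial S_2/\partial p = Q(t)$, so the two $Q\circ dP$ terms cancel, giving
\[
d_t S_2(q_a,p,t)\big|_{p=P(t)} = H(Q(t),P(t))\,dt + h(Q(t),P(t))\circ dW(t).
\]
Since $p_a$ (and hence the trajectory through $q_a$) was arbitrary, and since $(Q(t),P(t))=(\partial S_2/\partial p,\,p)$ along it, this is precisely \eqref{eq:Type II Stochastic Hamilton-Jacobi equation}.

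The main obstacle will be the rigorous justification of the Stratonovich chain rule invoked in the second step. This requires sufficient smoothness of $S_2$ in $p$, which is supplied by the mean-square differentiability of the flow in its boundary data under (H1)--(H2), as already exploited in Theorem~\ref{thm:S generates the Hamiltonian flow}, together with the fact that $S_2(q_a,p,t)$ is itself a semimartingale in $t$ for each fixed $p$. The latter is subtle because the trajectories defining $S_2(q_a,p,t)$ shift with $t$ through the backward-in-time boundary condition $\bar p(t;q_a,p)=p$; reparametrising in terms of the forward flow from an initial momentum $p_a(t)$ implicitly defined by $P(t;q_a,p_a(t))=p$ converts this into a standard semimartingale manipulation that meshes with the computation above.
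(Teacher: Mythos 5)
Your proposal is correct and follows essentially the same route as the paper's proof: evaluate $S_2$ along a forward trajectory launched from an arbitrary $(q_a,p_a)$, compute its total Stratonovich differential once from the defining formula (via the product rule, so the $P\circ dQ$ terms cancel) and once via the chain rule in $(p,t)$, then cancel the $Q\circ dP$ terms using $\partial S_2/\partial p=\bar q(t)$ from Theorem~\ref{thm:S generates the Hamiltonian flow}. Your closing remarks on justifying the chain rule and the semimartingale property of $t\mapsto S_2(q_a,p,t)$ in fact make explicit the step the paper itself labels as ``slightly informal.''
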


\begin{proof}
Choose an arbitrary pair $(q_a,p_a) \in T^*Q$ and define the particular solution $(\bar q(\tau), \bar p(\tau)) = F_{\tau,t_a}(q_a,p_a)$. Form the function $S_2(q_a, \bar p(t),t)$ and consider its total stochastic differential\footnote{In analogy to ordinary calculus, the total stochastic differential is understood as $S_2(q_a, \bar p(t_b),t_b)-S_2(q_a, \bar p(t_a),t_a) = \int_{t_a}^{t_b} \bar dS_2(q_a, \bar p(t),t)$, whereas the partial stochastic differential means $S_2(q_a, p_b,t_b)-S_2(q_a, p_b,t_a) = \int_{t_a}^{t_b} dS_2(q_a,p_b,t)$.} $\bar dS_2(q_a, \bar p(t),t)$ with respect to time. On one hand, the rules of Stratonovich calculus imply

\begin{equation}
\label{eq:dS2 from Stratonovich}
\bar dS_2(q_a, \bar p(t),t) = dS_2(q_a, \bar p(t),t) + \frac{\partial S_2}{\partial p}\Big( q_a, \bar p(t),t \Big)\circ d \bar p(t),
\end{equation} 

\noindent
where $dS_2$ denotes the partial stochastic differential of $S_2$ with respect to the $t$ argument. On the other hand, integration by parts in \eqref{eq:Type II time-dependent generating function} implies

\begin{equation}
\label{eq:dS2 from definition}
\bar dS_2(q_a, \bar p(t),t) = \bar q(t)\circ d \bar p(t) + H(\bar q(t),\bar p(t))\,dt + h(\bar q(t),\bar p(t))\circ dW(t).
\end{equation}

\noindent
Comparing \eqref{eq:dS2 from Stratonovich} and \eqref{eq:dS2 from definition}, and using \eqref{eq:Equations generating the flow of the Hamiltonian system}, we obtain

\begin{equation}
\label{eq:Stochastic Hamilton-Jacobi equation along a path}
d S_2(q_a, \bar p(t),t) = H\bigg(\frac{\partial S_2}{\partial p}\big(q_a, \bar p(t),t \big), \bar p(t) \bigg)\,dt + h\bigg(\frac{\partial S_2}{\partial p}\big(q_a, \bar p(t),t\big), \bar p(t) \bigg)\circ dW(t).
\end{equation}

\noindent
This equation is satisfied along a particular path $\bar p(t)$, however, as in the discussion preceding Theorem~\ref{thm:S generates the Hamiltonian flow}, we can argue, slightly informally, that for almost all $\omega \in \Omega$, and for $t$ sufficiently close to $t_a$, one can find open neighborhoods $\mathcal{U}(\omega)\subset Q$ and $\mathcal{V}(\omega)\subset Q^*$ which can be connected by the flow $F_{t,t_a}$, i.e., given $q_a \in \mathcal{U}(\omega)$ and $p \in \mathcal{V}(\omega)$, there exists a pathwise unique solution $(\bar q(\omega,\tau), \bar p(\omega,\tau))$ such that $\bar q(\omega,t_a)=q_a$ and $\bar p(\omega,t)=p$. With this assumption, \eqref{eq:Stochastic Hamilton-Jacobi equation along a path} can be reformulated as the full-blown stochastic PDE \eqref{eq:Type II Stochastic Hamilton-Jacobi equation}.\\
\end{proof}

\noindent
{\bf Remark:} Similar stochastic Hamilton-Jacobi equations were introduced in \cite{WangPHD}, \cite{Wang2014}, where they were used for constructing stochastic symplectic integrators by considering series expansions of generating functions in terms of multiple Stratonovich integrals. This was a direct generalization of a similar technique for deterministic Hamiltonian systems (see \cite{HLWGeometric}). In this work we explore the generalized Galerkin framework for constructing approximations of the generating function $S(q_a,p_b)$ in \eqref{eq:Equations generating the flow of the Hamiltonian system} by using its variational characterization \eqref{eq:Definition of the generating function}. Our approach is a stochastic generalization of the techniques proposed in \cite{LeokZhang} and \cite{OberBlobaum2015} for deterministic Hamiltonian and Lagrangian systems.

\subsection{Stochastic Noether's theorem}
\label{sec:Stochastic Noether's theorem}

Let a Lie group $G$ act on $Q$ by the left action $\Phi:G \times Q \longrightarrow Q$. The Lie group $G$ then acts on $TQ$ and $T^*Q$ by the tangent $\Phi^{TQ}:G \times TQ \longrightarrow TQ$ and cotangent $\Phi^{T^*Q}:G \times T^*Q \longrightarrow T^*Q$ lift actions, respectively, given in coordinates by the formulas (see \cite{HolmGMS}, \cite{MarsdenRatiuSymmetry})

\begin{align}
\label{eq:Tangent and cotangent lift actions}
\Phi^{TQ}_g(q,\dot q) & \equiv \Phi^{TQ}\big(g,(q,\dot q)\big) =\bigg( \Phi^i_g(q),\frac{\partial \Phi^i_g}{\partial q^j}(q) \dot q^j \bigg), \nonumber \\
\Phi^{T^*Q}_g(q,p) & \equiv \Phi^{T^*Q}\big(g,(q,p)\big) =\bigg( \Phi^i_g(q),p_j \frac{\partial \Phi^j_{g^{-1}}}{\partial q^i}\big(\Phi_g(q)\big) \bigg),
\end{align}

\noindent
where $i,j=1,\ldots,N$ and summation is implied over repeated indices. Let $\mathfrak{g}$ denote the Lie algebra of $G$ and $\exp: \mathfrak{g} \longrightarrow G$ the exponential map (see \cite{HolmGMS}, \cite{MarsdenRatiuSymmetry}). Each element $\xi \in \mathfrak{g}$ defines the infinitesimal generators $\xi_Q$, $\xi_{TQ}$, and $\xi_{T^*Q}$, which are vector fields on $Q$, $TQ$, and $T^*Q$, respectively, given by

\begin{align}
\label{eq:Infinitesimal generators}
\xi_Q(q) = \frac{d}{d \lambda} \bigg|_{\lambda =0} \Phi_{\exp \lambda \xi}(q), \qquad \xi_{TQ}(q, \dot q) = \frac{d}{d \lambda} \bigg|_{\lambda =0} \Phi^{TQ}_{\exp \lambda \xi}(q,\dot q), \qquad \xi_{T^*Q}(q, p) = \frac{d}{d \lambda} \bigg|_{\lambda =0} \Phi^{T^*Q}_{\exp \lambda \xi}(q,p).
\end{align}

\noindent
The momentum map $J: T^*Q \longrightarrow \mathfrak{g}^*$ associated with the action $\Phi^{T^*Q}$ is defined as the mapping such that for all $\xi \in \mathfrak{g}$ the function $J_\xi: T^*Q \ni (q,p) \longrightarrow \langle J(q,p),\xi \rangle \in \mathbb{R}$ is the Hamiltonian for the infinitesimal generator $\xi_{T^*Q}$, i.e.,

\begin{align}
\label{eq:Momentum map definition}
\xi^q_{T^*Q} = \frac{\partial J_\xi}{\partial p}, \qquad \xi^p_{T^*Q} = -\frac{\partial J_\xi}{\partial q},
\end{align}

\noindent
where $\xi_{T^*Q}(q,p) = \big(q,p,\xi^q_{T^*Q}(q,p),\xi^p_{T^*Q}(q,p)\big)$. The momentum map $J$ can be explicitly expressed as (see \cite{HolmGMS}, \cite{MarsdenRatiuSymmetry})

\begin{align}
\label{eq:Momentum map formula}
J_\xi(q,p) = p\cdot \xi_Q(q).
\end{align}

Noether's theorem for deterministic Hamiltonian systems relates symmetries of the Hamiltonian to quantities preserved by the flow of the system. It turns out that this result carries over to the stochastic case, as well. A stochastic version of Noether's theorem was proved in \cite{Bismut} and \cite{LaCa-Or2008}. For completeness, and for the benefit of the reader, below we state and provide a less involved proof of Noether's theorem for stochastic Hamiltonian systems.

\begin{thm}[{\bf Stochastic Noether's theorem}]
Suppose that the Hamiltonians $H:T^*Q \longrightarrow \mathbb{R}$ and $h:T^*Q \longrightarrow \mathbb{R}$ are invariant with respect to the cotangent lift action $\Phi^{T^*Q}:G \times T^*Q \longrightarrow T^*Q$ of the Lie group $G$, that is,

\begin{align}
\label{eq:Invariance of the Hamiltonians}
H \circ \Phi^{T^*Q}_g = H, \qquad \qquad h \circ \Phi^{T^*Q}_g = h,
\end{align}

\noindent
for all $g \in G$. Then the cotangent lift momentum map $J:T^*Q \longrightarrow \mathfrak{g}^*$ associated with $\Phi^{T^*Q}$ is almost surely preserved along the solutions of the stochastic Hamiltonian system \eqref{eq: Stochastic Hamiltonian system}.
\end{thm}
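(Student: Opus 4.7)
The plan is to reduce the statement to a Stratonovich chain-rule calculation, exploiting the fact that the Stratonovich differential obeys the ordinary rules of calculus. For any fixed $\xi \in \mathfrak{g}$, I will show that $J_\xi(q(t),p(t))$ is almost surely constant in $t$; since $\xi$ is arbitrary, this gives preservation of the full momentum map $J$.

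The first step is to translate the finite invariance hypothesis \eqref{eq:Invariance of the Hamiltonians} into its infinitesimal form. Differentiating $H \circ \Phi^{T^*Q}_{\exp \lambda \xi} = H$ at $\lambda = 0$ and using the definition of $\xi_{T^*Q}$ yields
\begin{equation*}
\frac{\partial H}{\partial q}(q,p)\cdot \xi^q_{T^*Q}(q,p) + \frac{\partial H}{\partial p}(q,p)\cdot \xi^p_{T^*Q}(q,p) = 0,
\end{equation*}
and an identical identity holds for $h$. Substituting the momentum-map relations $\xi^q_{T^*Q} = \partial J_\xi/\partial p$ and $\xi^p_{T^*Q} = -\partial J_\xi/\partial q$ from \eqref{eq:Momentum map definition}, these two identities become the vanishing of the canonical Poisson brackets $\{H, J_\xi\} = 0$ and $\{h, J_\xi\} = 0$.

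The second step is to compute $dJ_\xi(q(t),p(t))$ along a solution of \eqref{eq: Stochastic Hamiltonian system} using the Stratonovich chain rule, which is valid because $H$ and $h$ (hence $J_\xi$, which is linear in $p$) are sufficiently smooth and $(q(t),p(t))$ is an almost surely continuous semimartingale. Substituting the Stratonovich equations of motion gives
\begin{align*}
dJ_\xi &= \frac{\partial J_\xi}{\partial q}\circ dq + \frac{\partial J_\xi}{\partial p}\circ dp \\
&= \Big( \frac{\partial J_\xi}{\partial q}\frac{\partial H}{\partial p} - \frac{\partial J_\xi}{\partial p}\frac{\partial H}{\partial q}\Big)\,dt + \Big( \frac{\partial J_\xi}{\partial q}\frac{\partial h}{\partial p} - \frac{\partial J_\xi}{\partial p}\frac{\partial h}{\partial q}\Big)\circ dW(t) \\
&= \{J_\xi,H\}\,dt + \{J_\xi,h\}\circ dW(t),
\end{align*}
which vanishes identically by the first step. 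Integrating from $t_a$ to $t$ gives $J_\xi(q(t),p(t)) = J_\xi(q(t_a),p(t_a))$ almost surely, and varying $\xi$ over $\mathfrak{g}$ yields the full conservation of $J$.

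There is no serious obstacle here: the only point that requires care is the appeal to the Stratonovich chain rule, which is precisely what makes the Stratonovich (as opposed to It\^o) formulation of \eqref{eq: Stochastic Hamiltonian system} convenient, since it lets the deterministic computation proceed verbatim without It\^o correction terms. An alternative route would be to use the It\^o form \eqref{eq: Ito form of the stochastic Hamiltonian system}-\eqref{eq: Ito coefficients} and verify by direct computation that the It\^o correction in $dJ_\xi$ vanishes under the invariance hypothesis, but this is strictly longer and less transparent than the Stratonovich argument above.
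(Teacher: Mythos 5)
Your proposal is correct and follows essentially the same route as the paper's own proof: pass from the finite invariance to the infinitesimal identity $dH\cdot\xi_{T^*Q}=dh\cdot\xi_{T^*Q}=0$, then apply the Stratonovich chain rule to $J_\xi(q(t),p(t))$ and substitute the equations of motion together with the momentum-map relations \eqref{eq:Momentum map definition}. Rewriting the cancellation in terms of vanishing Poisson brackets is a purely cosmetic repackaging of the paper's computation in \eqref{eq:Stochastic differential of J}.
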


\begin{proof}
Equation \eqref{eq:Invariance of the Hamiltonians} implies that the Hamiltonians are infinitesimally invariant with respect to the action of $G$, that is, for all $\xi \in \mathfrak{g}$ we have 

\begin{align}
\label{eq:Infinitesimal invariance of the Hamiltonians}
dH\cdot \xi_{T^*Q} = 0, \qquad \qquad dh\cdot \xi_{T^*Q} = 0,
\end{align}

\noindent
where $dH$ and $dh$ denote differentials with respect to the variables $q$ and $p$. Let $(q(t),p(t))$ be a solution of \eqref{eq: Stochastic Hamiltonian system} and consider the stochastic process $J_\xi(q(t),p(t))$, where $\xi \in \mathfrak{g}$ is arbitrary. Using the rules of Stratonovich calculus we can calculate the stochastic differential

\begin{align}
\label{eq:Stochastic differential of J}
dJ_\xi\big(q(t),p(t)\big)& =\frac{\partial J_\xi}{\partial q}(q(t),p(t))\circ dq + \frac{\partial J_\xi}{\partial p}(q(t),p(t))\circ dp \nonumber \\
                         & =-\bigg( \frac{\partial H}{\partial q} \xi^q_{T^*Q} + \frac{\partial H}{\partial p} \xi^p_{T^*Q}  \bigg) \,dt -\bigg( \frac{\partial h}{\partial q} \xi^q_{T^*Q} + \frac{\partial h}{\partial p} \xi^p_{T^*Q}  \bigg)\circ dW \nonumber \\
                         & = -(dH\cdot \xi_{T^*Q})\,dt - (dh\cdot \xi_{T^*Q})\circ dW = 0,
\end{align}

\noindent
where we used \eqref{eq: Stochastic Hamiltonian system}, \eqref{eq:Momentum map definition}, and \eqref{eq:Infinitesimal invariance of the Hamiltonians}. Therefore, $J_\xi\big(q(t),p(t)\big) = \text{const}$ almost surely for all $\xi \in \mathfrak{g}$, which completes the proof.\\
\end{proof}

%%%%%%%%%%%%%%%%%%%%%%%%%%%%%%%%%%%%%%%%%%%%%%%%%%%%%%%%%%%%%%%%%%%%%%%%%%%%%%%%%%%%
%  STOCHASTIC GALERKIN HAMILTONIAN VARIATIONAL INTEGRATORS
%%%%%%%%%%%%%%%%%%%%%%%%%%%%%%%%%%%%%%%%%%%%%%%%%%%%%%%%%%%%%%%%%%%%%%%%%%%%%%%%%%%%
\section{Stochastic Galerkin Hamiltonian Variational Integrators}
\label{sec:Stochastic Galerkin Hamiltonian Variational Integrators}

If the converse to Theorem~\ref{thm:Stochastic Variational Principle} holds, then the generating function $S(q_a,p_b)$ defined in \eqref{eq:Definition of the generating function} could be equivalently characterized by

\begin{equation}
\label{eq:Variational definition of the generating function}
S(q_a,p_b) = \ext_{  \substack{(q,p)\in C([t_a,t_b])\\q(t_a)=q_a,\,\, p(t_b)=p_b}} \mathcal{B}\big[q(\cdot), p(\cdot) \big],
\end{equation}

\noindent
where the extremum is taken pointwise in the probability space $\Omega$. This characterization allows us to construct stochastic Galerkin variational integrators by choosing a finite dimensional subspace of $C([t_a,t_b])$ and a quadrature rule for approximating the integrals in the action functional $\mathcal{B}$. Galerkin variational integrators for deterministic systems were first introduced in \cite{MarsdenWestVarInt}, and further developed in \cite{HallLeokSpectral}, \cite{LeokShingel}, \cite{LeokZhang}, \cite{OberBlobaum2016}, and \cite{OberBlobaum2015}. In the remainder of the paper, we will generalize these ideas to the stochastic case.

\subsection{Construction of the integrator}
\label{sec:Construction of the integrator}

Suppose we would like to solve \eqref{eq: Stochastic Hamiltonian system} on the interval $[0,T]$ with the initial conditions $(q_0,p_0)\in T^*Q$. Consider the discrete set of times $t_k = k\cdot\Delta t$ for $k=0,1,\ldots,K$, where $\Delta t = T/K$ is the time step. In order to determine the discrete curve $\{(q_k,p_k)\}_{k=0,\ldots,K}$ that approximates the exact solution of \eqref{eq: Stochastic Hamiltonian system} at times $t_k$ we need to construct an approximation of the exact stochastic flow $F_{t_{k+1},t_k}$ on each interval $[t_k,t_{k+1}]$, so that $(q_{k+1},p_{k+1}) \approx F_{t_{k+1},t_k}(q_k,p_k)$. Let us consider the space $C^s([t_k,t_{k+1}])\subset C([t_k,t_{k+1}])$ defined as

\begin{equation}
\label{eq:Definition of the Cs space}
C^s([t_k, t_{k+1}]) = \big\{ (q,p)\in C([t_k, t_{k+1}])  \, \big| \, \text{$q$ is a polynomial of degree $s$} \big\}.
\end{equation}

\noindent
For convenience, we will express $q(t)$ in terms of Lagrange polynomials. Consider the control points $0=d_0<d_1<\ldots<d_s=1$ and let the corresponding Lagrange polynomials of degree $s$ be denoted by $l_{\mu,s}(\tau)$, that is, $l_{\mu,s}(d_\nu)=\delta_{\mu\nu}$. A polynomial trajectory $q_d(t;q^\mu)$ can then be expressed as

\begin{equation}
\label{eq:Polynomial q(t)}
q_d(t_k+\tau\Delta t; q^\mu) = \sum_{\mu = 0}^s q^\mu l_{\mu,s}(\tau), \qquad\qquad \dot q_d(t_k+\tau\Delta t; q^\mu) = \frac{1}{\Delta t}\sum_{\mu = 0}^s q^\mu \dot{l}_{\mu,s}(\tau),
\end{equation}

\noindent
where $q^\nu = q_d(t_k+d_\nu \Delta t; q^\mu)$ for $\nu=0,\ldots,s$ are the control values, $\dot q_d$ denotes the time derivative of $q_d$, and $\dot{l}_{\mu,s}$ denotes the derivative of the Lagrange polynomial $l_{\mu,s}$ with respect to its argument. The restriction of the action functional \eqref{eq:Stochastic action functional} to the space $C^s([t_k, t_{k+1}])$ takes the form

\begin{equation}
\label{eq:Restricted stochastic action functional}
\mathcal{B}^s\big[q_d(\cdot;q^\mu),p(\cdot) \big] = p(t_{k+1})q^s - \int_{t_k}^{t_{k+1}} \Big[ p(t)\dot q_d(t) - H\big(q_d(t),p(t)\big) \Big] \,dt + \int_{t_k}^{t_{k+1}} h\big(q_d(t),p(t)\big)\circ dW(t),
\end{equation}

\noindent
since for differentiable functions $dq_d(t) = \dot q_d(t)\,dt$, where for brevity $q_d(t) \equiv q_d(t;q^\mu)$. Next we approximate the integrals in \eqref{eq:Restricted stochastic action functional} using numerical quadrature rules $(\alpha_i, c_i)_{i=1}^r$ and $(\beta_i, c_i)_{i=1}^r$, where $0\leq c_1<\ldots<c_r\leq 1$ are the quadrature points, and $\alpha_i$, $\beta_i$ are the corresponding weights. At this point we only make a general assumption that for each $i$ we have $\alpha_i \not = 0$ or $\beta_i \not =0$. More specific examples will be presented in Section~\ref{sec:Examples}. The approximate action functional takes the form

\begin{align}
\label{eq:Approximate stochastic action functional}
\bar{\mathcal{B}}^s\big[q_d(\cdot;q^\mu),p(\cdot) \big] &= p(t_{k+1})q^s - \Delta t \sum_{i=1}^r \alpha_i \Big[ p(t_k+c_i\Delta t)\dot q_d(t_k+c_i\Delta t) - H\big(q_d(t_k+c_i\Delta t),p(t_k+c_i\Delta t)\big) \Big] \nonumber \\
&\phantom{= p(t_{k+1})q^s} + \Delta W \sum_{i=1}^r \beta_i  h\big(q_d(t_k+c_i\Delta t),p(t_k+c_i\Delta t)\big),
\end{align}

\noindent
where $\Delta W = W(t_{k+1})-W(t_k)$ is the increment of the Wiener process over the considered time interval and is a Gaussian random variable with zero mean and variance $\Delta t$. The way of approximating the Stratonovich integral above was inspired by the ideas presented in \cite{BouRabeeSVI}, \cite{Burrage2000}, \cite{MaDing2015}, \cite{MilsteinRepin2001}, and \cite{MilsteinRepin}. Note that since we only used $\Delta W = \int_{t_k}^{t_{k+1}}dW(t)$ in the above approximation, we can in general expect mean-square convergence of order 1.0 only. To obtain mean-square convergence of higher order we would also need to include higher-order multiple Stratonovich integrals, e.g., to achieve convergence of order 1.5 we would need to include terms involving $\Delta Z = \int_{t_k}^{t_{k+1}}\int_{t_k}^{t}dW(\xi)\,dt$ (see \cite{Burrage2000}, \cite{MilsteinRepin2001}, \cite{MilsteinRepin}). We can now approximate the generating function $S(q_k,p_{k+1})$ with the discrete Hamiltonian function $H^+_d(q_k,p_{k+1})$ defined as

\begin{align}
\label{eq:Discrete Hamiltonian}
H^+_d(q_k,p_{k+1}) &= \ext_{ \substack{q^1,\ldots,q^s \in Q \\ P_1, \ldots, P_r \in Q^* \\ q^0 = q_k} } \bigg\{ p_{k+1}q^s - \Delta t \sum_{i=1}^r \alpha_i \Big[ P_i \dot q_d(t_k+c_i\Delta t) - H\big(q_d(t_k+c_i\Delta t),P_i\big) \Big] \nonumber \\
&\phantom{= p_{k+1}q^saaaaaaaaaa} + \Delta W \sum_{i=1}^r \beta_i  h\big(q_d(t_k+c_i\Delta t),P_i\big) \bigg\},
\end{align}

\noindent
where we denoted $P_i\equiv p(t_k+c_i\Delta t)$. The numerical scheme $(q_k,p_k)\longrightarrow (q_{k+1},p_{k+1})$ is now implicitly generated by $H^+_d(q_k,p_{k+1})$ as in \eqref{eq:Equations generating the flow of the Hamiltonian system}:

\begin{equation}
\label{eq:Discrete flow generated by the discrete Hamiltonian}
q_{k+1} = D_2 H^+_d(q_k,p_{k+1}), \qquad\qquad p_k = D_1 H^+_d(q_k,p_{k+1}).
\end{equation}

\noindent
Equations \eqref{eq:Discrete Hamiltonian} and \eqref{eq:Discrete flow generated by the discrete Hamiltonian} can be written together as the following system:

\begin{subequations}
\label{eq:Stochastic Galerkin variational integrator}
\begin{align}
\label{eq:Stochastic Galerkin variational integrator 1}
-p_k&=\sum_{i=1}^r \alpha_i \Big[ P_i \dot l_{0,s}(c_i) - \Delta t \frac{\partial H}{\partial q}\big(t_k+c_i\Delta t\big) l_{0,s}(c_i) \Big] - \Delta W \sum_{i=1}^r \beta_i \frac{\partial h}{\partial q}\big(t_k+c_i\Delta t\big) l_{0,s}(c_i), \\
\label{eq:Stochastic Galerkin variational integrator 2}
0&=\sum_{i=1}^r \alpha_i \Big[ P_i \dot l_{\mu,s}(c_i) - \Delta t \frac{\partial H}{\partial q}\big(t_k+c_i\Delta t\big) l_{\mu,s}(c_i) \Big] - \Delta W \sum_{i=1}^r \beta_i \frac{\partial h}{\partial q}\big(t_k+c_i\Delta t\big) l_{\mu,s}(c_i),\\
\label{eq:Stochastic Galerkin variational integrator 3}
p_{k+1}&=\sum_{i=1}^r \alpha_i \Big[ P_i \dot l_{s,s}(c_i) - \Delta t \frac{\partial H}{\partial q}\big(t_k+c_i\Delta t\big) l_{s,s}(c_i) \Big] - \Delta W \sum_{i=1}^r \beta_i \frac{\partial h}{\partial q}\big(t_k+c_i\Delta t\big) l_{s,s}(c_i), \\
\label{eq:Stochastic Galerkin variational integrator 4}
\alpha_i \dot q_d&(t_k+c_i\Delta t) = \alpha_i \frac{\partial H}{\partial p}\big(t_k+c_i\Delta t\big) + \beta_i \frac{\Delta W}{\Delta t} \frac{\partial h}{\partial p}\big(t_k+c_i\Delta t\big),\\
\label{eq:Stochastic Galerkin variational integrator 5}
q_{k+1}&=q^s,
\end{align}
\end{subequations}

\noindent
where $\mu=1,\ldots,s-1$ in \eqref{eq:Stochastic Galerkin variational integrator 2}, $i=1,\ldots, r$ in \eqref{eq:Stochastic Galerkin variational integrator 4}, and for brevity we have introduced the notation 

\begin{equation*}
H(t_k+c_i\Delta t)\equiv H(q_d(t_k+c_i\Delta t),p(t_k+c_i\Delta t)) \textrm{\qquad (similarly for $h$).}
\end{equation*} 

\noindent
Equation \eqref{eq:Stochastic Galerkin variational integrator 1} corresponds to the second equation in \eqref{eq:Discrete flow generated by the discrete Hamiltonian}, equations \eqref{eq:Stochastic Galerkin variational integrator 2}, \eqref{eq:Stochastic Galerkin variational integrator 3} and \eqref{eq:Stochastic Galerkin variational integrator 4} correspond to extremizing \eqref{eq:Discrete Hamiltonian} with respect to $q^1, \ldots, q^s$, and $P_1,\ldots, P_r$, respectively, and finally \eqref{eq:Stochastic Galerkin variational integrator 5} is the first equation in \eqref{eq:Discrete flow generated by the discrete Hamiltonian}. Knowing $(q_k,p_k)$, the system \eqref{eq:Stochastic Galerkin variational integrator} allows us to solve for $(q_{k+1},p_{k+1})$: we first simultaneously solve \eqref{eq:Stochastic Galerkin variational integrator 1}, \eqref{eq:Stochastic Galerkin variational integrator 2} and \eqref{eq:Stochastic Galerkin variational integrator 4} ($(s+r)N$ equations) for $q^1, \ldots, q^s$ and $P_1,\ldots, P_r$ ($(s+r)N$ unknowns); then $q_{k+1}=q^s$ and \eqref{eq:Stochastic Galerkin variational integrator 3} is an explicit update rule for $p_{k+1}$. When $h\equiv 0$, then \eqref{eq:Stochastic Galerkin variational integrator} reduces to the deterministic Galerkin variational integrator discussed in \cite{OberBlobaum2015}. Note that depending on the choice of the Hamiltonians and quadrature rules, the system \eqref{eq:Stochastic Galerkin variational integrator} may be explicit, but in the general case it is implicit (see Section~\ref{sec:Examples}). One can use the Implicit Function Theorem to show that for sufficiently small $\Delta t$ and $|\Delta W|$ it will have a solution. However, since the increments $\Delta W$ are unbounded, for some values of $\Delta W$ solutions might not exist. To avoid problems with numerical implementations, if necessary, one can replace $\Delta W$ in \eqref{eq:Stochastic Galerkin variational integrator} with the truncated random variable $\overline{\Delta W}$ defined as

\begin{align}
\label{eq:Truncated Wiener increments}
\overline{\Delta W} =
\begin{cases}
A, & \text{if $\Delta W > A$},\\  
\Delta W, & \text{if $|\Delta W| \leq A$}, \\
 -A, & \text{if $\Delta W < -A$},
\end{cases}
\end{align}

\noindent
where $A>0$ is suitably chosen for the considered problem. See \cite{Burrage2004} and \cite{MilsteinRepin} for more details regarding schemes with truncated random increments and their convergence. Alternatively, one could employ the techniques presented in, e.g., \cite{Rossler2004}, \cite{Rossler2007}, and \cite{Wang2017}, where the unbounded increments $\Delta W$ have been replaced with discrete random variables.

Although the scheme \eqref{eq:Stochastic Galerkin variational integrator} formally appears to be a straightforward generalization of its deterministic counterpart, it should be emphasized that the main difference lies in the fact that the increments $\Delta W$ are random variables such that $E(\Delta W^2)=\Delta t$, which makes the convergence analysis significantly more challenging than in the deterministic case. The main difficulty is in the choice of the parameters $s$, $r$, $\alpha_i$, $\beta_i$, $c_i$, so that the resulting numerical scheme converges in some sense to the solutions of \eqref{eq: Stochastic Hamiltonian system}. The number of parameters and order conditions grows rapidly, when terms approximating multiple Stratonovich integrals are added (see Section~\ref{sec:Methods of order 3/2} and \cite{Burrage1996}, \cite{Burrage1998}, \cite{Burrage2000}, \cite{Burrage2004}). In Section~\ref{sec:Properties of stochastic Galerkin variational integrators} and Section~\ref{sec:Stochastic symplectic partitioned Runge-Kutta methods} we study the geometric properties of the family of schemes described by \eqref{eq:Stochastic Galerkin variational integrator}, whereas in Section~\ref{sec:Examples} and Section~\ref{sec:Convergence} we provide concrete choices of the coefficients that lead to convergent methods.

\subsection{Properties of stochastic Galerkin variational integrators}
\label{sec:Properties of stochastic Galerkin variational integrators}

The key features of variational integrators are their symplecticity and exact preservation of the discrete counterparts of conserved quantities (momentum maps) related to the symmetries of the Lagrangian or Hamiltonian (see \cite{MarsdenWestVarInt}). These properties carry over to the stochastic case, as was first demonstrated in \cite{BouRabeeSVI} for Lagrangian systems. In what follows, we will show that the stochastic Galerkin Hamiltonian variational integrators constructed in Section~\ref{sec:Construction of the integrator} also possess these properties.

\begin{thm}[{\bf Symplecticity of the discrete flow}] Let $F^+_{t_{k+1},t_k}:\Omega \times T^*Q \longrightarrow T^*Q$ be the dicrete stochastic flow implicitly defined by the discrete Hamiltonian $H^+_d$ as in \eqref{eq:Discrete flow generated by the discrete Hamiltonian}. Then $F^+_{t_{k+1},t_k}$ is almost surely symplectic, that is, 

\begin{equation}
\label{eq:Symplecticity of the discrete flow}
(F^+_{t_{k+1},t_k})^* \Omega_{T^*Q} = \Omega_{T^*Q},
\end{equation}

\noindent
where $\Omega_{T^*Q} = \sum_{i=1}^N dq^i \wedge dp^i$ is the canonical symplectic form on $T^*Q$.
\end{thm}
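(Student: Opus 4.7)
The plan is to mirror the classical generating-function proof of symplecticity pointwise in $\omega \in \Omega$, treating the Wiener increment $\Delta W = \Delta W(\omega)$ as a frozen parameter. For each fixed $\omega$ the discrete Hamiltonian $H^+_d(q_k,p_{k+1})$ is a deterministic function of its two arguments, and the defining relations
\begin{equation*}
q_{k+1} = D_2 H^+_d(q_k,p_{k+1}), \qquad p_k = D_1 H^+_d(q_k,p_{k+1})
\end{equation*}
implicitly specify the map $F^+_{t_{k+1},t_k}:(q_k,p_k)\mapsto(q_{k+1},p_{k+1})$. So the task reduces to the standard type-II generating function lemma applied $\omega$-by-$\omega$.

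First I would establish the regularity needed to differentiate: under (H1)--(H2), for sufficiently small $\Delta t$ and $|\Delta W|$ the extremization problem \eqref{eq:Discrete Hamiltonian} has a unique (local) critical point in the internal variables $q^1,\dots,q^s,P_1,\dots,P_r$, and by the implicit function theorem this critical point depends in a $C^2$ fashion on $(q_k,p_{k+1})$. Because the integrand inside the $\ext$ is $C^2$ in all of its variables (inheriting the regularity of $H$ and $h$), the envelope $H^+_d$ is itself $C^2$ in $(q_k,p_{k+1})$, so the mixed partials $\partial^2 H^+_d/\partial q_k^j\partial p_{k+1}^i$ and $\partial^2 H^+_d/\partial p_{k+1}^i\partial q_k^j$ agree. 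This is the main technical point; everything afterward is a two-line computation.

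Next I would take the exterior derivative of the two defining equations (for fixed $\omega$) to obtain, in components,
\begin{align*}
dq_{k+1}^i &= \frac{\partial^2 H^+_d}{\partial q_k^j\,\partial p_{k+1}^i}\,dq_k^j + \frac{\partial^2 H^+_d}{\partial p_{k+1}^j\,\partial p_{k+1}^i}\,dp_{k+1}^j, \\
dp_k^i &= \frac{\partial^2 H^+_d}{\partial q_k^j\,\partial q_k^i}\,dq_k^j + \frac{\partial^2 H^+_d}{\partial p_{k+1}^j\,\partial q_k^i}\,dp_{k+1}^j.
\end{align*}
Wedging and summing yields
\begin{equation*}
\sum_i dq_{k+1}^i\wedge dp_{k+1}^i = \sum_{i,j}\frac{\partial^2 H^+_d}{\partial q_k^j\,\partial p_{k+1}^i}\,dq_k^j\wedge dp_{k+1}^i,
\end{equation*}
because the pure Hessian block $\partial^2 H^+_d/\partial p_{k+1}^j\partial p_{k+1}^i$ is symmetric in $(i,j)$ and contracts to zero against the antisymmetric $dp_{k+1}^j\wedge dp_{k+1}^i$. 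An analogous computation gives
\begin{equation*}
\sum_i dq_k^i\wedge dp_k^i = \sum_{i,j}\frac{\partial^2 H^+_d}{\partial p_{k+1}^j\,\partial q_k^i}\,dq_k^i\wedge dp_{k+1}^j,
\end{equation*}
and relabeling dummy indices together with equality of mixed partials identifies the two right-hand sides.

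Finally I would note that the above identity holds for all $\omega$ in the full-measure set on which the implicit function theorem applies (for the given $\Delta t$ and the realized $\Delta W$, possibly after the truncation \eqref{eq:Truncated Wiener increments}), giving \eqref{eq:Symplecticity of the discrete flow} almost surely. The only conceptual obstacle is the smoothness of the envelope $H^+_d$; once that is granted, the proof is the pointwise deterministic argument and no Stratonovich-specific stochastic calculus is required because $\omega$ enters only as a parameter at this stage.
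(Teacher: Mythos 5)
Your argument is correct and is essentially the paper's proof: the component computation you carry out (wedging the differentials of the two defining relations, killing the symmetric Hessian blocks against the antisymmetric wedge, and invoking equality of mixed partials) is exactly the expanded form of the identity $0 = ddH^+_d(q_k,p_{k+1}) = \sum_i dq^i_{k+1}\wedge dp^i_{k+1} - \sum_i dq^i_k\wedge dp^i_k$ that the paper uses, applied pathwise since \eqref{eq:Discrete flow generated by the discrete Hamiltonian} holds almost surely. Your additional care about the $C^2$ regularity of the envelope $H^+_d$ via the implicit function theorem is a reasonable refinement of a point the paper leaves implicit, but it does not change the route.
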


\begin{proof}
The proof follows immediately by observing that (see \cite{LeokZhang})

\begin{equation}
\label{eq:Proof of symplecticity}
0 = ddH^+(q_k,p_{k+1}) = \sum_{i=1}^N dq^i_{k+1} \wedge dp^i_{k+1} - \sum_{i=1}^N dq^i_{k} \wedge dp^i_{k} = (F^+_{t_{k+1},t_k})^* \Omega_{T^*Q} - \Omega_{T^*Q},
\end{equation}

\noindent
where $d$ in the above formula denotes the differential operator with respect to the variables $q$ and $p$ and is understood in the mean-square sense. The result holds almost surely, because equation \eqref{eq:Discrete flow generated by the discrete Hamiltonian} holds almost surely.\\
\end{proof}

The discrete counterpart of stochastic Noether's theorem readily generalizes from the corresponding theorem in the deterministic case.

\begin{thm}[{\bf Discrete stochastic Noether's theorem}] Let $\Phi^{T^*Q}$ be the cotangent lift action of the action $\Phi$ of the Lie group $G$ on the configuration space $Q$. If the generalized discrete stochastic Lagrangian $R_d(q_k, p_{k+1}) = p_{k+1}q_{k+1} - H^+_d(q_k,p_{k+1})$, where $q_{k+1}=D_2H^+_d(q_k,p_{k+1})$, is invariant under the action of $G$, that is,

\begin{equation}
\label{eq:Invariance of the generalized Lagrangian}
R_d \big( \Phi_g(q_k), \pi_{Q^*}\circ\Phi^{T^*Q}_g(q_{k+1},p_{k+1}) \big) = R_d(q_k, p_{k+1}), \quad \qquad \text{for all $g\in G$},
\end{equation}

\noindent
where $\pi_{Q^*}: Q \times Q^* \longrightarrow Q^*$ is the projection onto $Q^*$, then the cotangent lift momentum map $J$ associated with $\Phi^{T^*Q}$ is almost surely preserved, i.e., a.s. $J(q_{k+1},p_{k+1}) = J (q_{k},p_{k})$.
\end{thm}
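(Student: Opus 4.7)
The plan is to mirror the discrete Noether argument of Leok--Zhang for Type II generating functions, adapting it to the stochastic setting via the almost-sure validity of the update rule \eqref{eq:Discrete flow generated by the discrete Hamiltonian}. First I would differentiate the defining relation $R_d(q_k,p_{k+1}) = p_{k+1}\,q_{k+1}(q_k,p_{k+1}) - H^+_d(q_k,p_{k+1})$, using $dH^+_d = p_k\,dq_k + q_{k+1}\,dp_{k+1}$ (which is \eqref{eq:Discrete flow generated by the discrete Hamiltonian} recast as a one-form), to obtain the clean identity
\[
dR_d \;=\; p_{k+1}\,dq_{k+1} \;-\; p_k\,dq_k,
\]
where $dq_{k+1}$ is the differential of $q_{k+1}(q_k,p_{k+1})=D_2 H^+_d(q_k,p_{k+1})$ viewed as a function on the $(q_k,p_{k+1})$-domain.

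Next I would differentiate the invariance hypothesis \eqref{eq:Invariance of the generalized Lagrangian} at $g=\exp(\lambda\xi)$, $\lambda=0$, for an arbitrary $\xi\in\mathfrak{g}$. Because the $p$-component of $\Phi^{T^*Q}_g(q_{k+1},p_{k+1})$ infinitesimally equals $\xi^p_{T^*Q}(q_{k+1},p_{k+1})$, the induced tangent vector at the point $(q_k,p_{k+1})$ is $\bigl(\xi_Q(q_k),\,\xi^p_{T^*Q}(q_{k+1},p_{k+1})\bigr)$. Pairing this tangent vector with the one-form from the first step yields
\[
0 \;=\; p_{k+1}\cdot\delta q_{k+1} \;-\; p_k\cdot\xi_Q(q_k),
\]
where $\delta q_{k+1}$ denotes the induced infinitesimal variation of $q_{k+1}$.

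The crucial intermediate step is to establish the equivariance of the discrete flow, i.e., $\delta q_{k+1} = \xi_Q(q_{k+1})$. I would derive this either (a) directly from the scheme \eqref{eq:Stochastic Galerkin variational integrator}, by noting that invariance of $H$ and $h$ makes their partial derivatives transform equivariantly under $\Phi^{T^*Q}$, so that acting by $G$ on all internal variables $(q^\nu,P_i)$ preserves the entire system \eqref{eq:Stochastic Galerkin variational integrator}; or (b) from a second differentiation of \eqref{eq:Invariance of the generalized Lagrangian} in $p_{k+1}$ alone, combined with the explicit cotangent-lift formula \eqref{eq:Tangent and cotangent lift actions} and the Type II relation $q_{k+1}=D_2 H^+_d$, which together force the finite relation $D_2 H^+_d(\Phi_g(q_k),p'_{k+1})=\Phi_g(q_{k+1})$ whose $\lambda$-derivative is exactly the desired equivariance.

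Substituting equivariance into the previous identity and invoking the momentum map formula \eqref{eq:Momentum map formula} gives $J_\xi(q_k,p_k)=p_k\cdot\xi_Q(q_k)=p_{k+1}\cdot\xi_Q(q_{k+1})=J_\xi(q_{k+1},p_{k+1})$, and since $\xi\in\mathfrak{g}$ is arbitrary, $J(q_{k+1},p_{k+1})=J(q_k,p_k)$. The identity inherits its almost-sure qualifier from \eqref{eq:Discrete flow generated by the discrete Hamiltonian}. The main obstacle I anticipate is the equivariance step: once that is in place, the rest is formally identical to the deterministic Type II discrete Noether proof, with the stochasticity entering only through the almost-sure qualifier on the update equations.
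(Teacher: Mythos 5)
Your proposal is correct and follows essentially the same route as the paper, whose own proof consists only of a citation of Theorem~4 in \cite{LeokZhang} together with the observation that the almost-sure qualifier is inherited from the almost-sure validity of \eqref{eq:Discrete flow generated by the discrete Hamiltonian}; your computation ($dR_d = p_{k+1}\,dq_{k+1} - p_k\,dq_k$ contracted with the infinitesimal generator of the lifted action, then the momentum map formula) is exactly the Leok--Zhang Type-II discrete Noether argument. The equivariance of the discrete flow that you single out as the crux is indeed the only substantive step, and your route~(a) is the one consistent with how the paper subsequently verifies the invariance hypothesis (from invariance of $H$ and $h$ plus equivariance of the interpolating polynomial).
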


\begin{proof}
See the proof of Theorem~4 in \cite{LeokZhang}. In our case the result holds almost surely, because equation \eqref{eq:Discrete flow generated by the discrete Hamiltonian} holds almost surely.\\
\end{proof}

\noindent
For applications, it is useful to know under what conditions the discrete Hamiltonian \eqref{eq:Discrete Hamiltonian} inherits the symmetry properties of the Hamiltonians $H$ and $h$. Not unexpectedly, this depends on the behavior of the interpolating polynomial \eqref{eq:Polynomial q(t)} under the group action. We say that the polynomial $q_d(t; q^\mu)$ is equivariant with respect to $G$ if for all $g \in G$ we have

\begin{equation}
\label{eq:Equivariance of the polynomial}
\Phi^{TQ}_g \Big( q_d(t; q^\mu), \dot q_d(t; q^\mu) \Big) = \Big( q_d\big(t; \Phi_g(q^\mu)\big), \dot q_d\big(t; \Phi_g(q^\mu)\big) \Big).
\end{equation}

\begin{thm} Suppose that the Hamiltonians $H:T^*Q \longrightarrow \mathbb{R}$ and $h:T^*Q \longrightarrow \mathbb{R}$ are invariant with respect to the cotangent lift action $\Phi^{T^*Q}:G \times T^*Q \longrightarrow T^*Q$ of the Lie group $G$, that is,

\begin{align}
\label{eq:Invariance of the Hamiltonians in discrete Nother's theorem}
H \circ \Phi^{T^*Q}_g = H, \qquad \qquad h \circ \Phi^{T^*Q}_g = h,
\end{align}

\noindent
for all $g \in G$, and suppose the interpolating polynomial $q_d(t; q^\mu)$ is equivariant with respect to $G$. Then the generalized discrete stochastic Lagrangian $R_d(q_k, p_{k+1}) = p_{k+1}q_{k+1} - H^+_d(q_k,p_{k+1})$ corresponding to the discrete Hamiltonian \eqref{eq:Discrete Hamiltonian}, where $q_{k+1}=D_2H^+_d(q_k,p_{k+1})$, is invariant with respect to the action of $G$.
\end{thm}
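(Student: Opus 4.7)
The plan is to exploit a fortunate cancellation in the definition of $R_d$: at the critical point defining $H^+_d(q_k,p_{k+1})$ one has $q^s=q_{k+1}$, so the boundary contribution $p_{k+1}q^s$ inside $H^+_d$ precisely cancels the $p_{k+1}q_{k+1}$ in $R_d$. This leaves
\begin{equation*}
R_d(q_k,p_{k+1}) = \Delta t\sum_{i=1}^r \alpha_i\bigl[P_i\,\dot q_d(t_k+c_i\Delta t;q^\mu) - H(q_d(t_k+c_i\Delta t;q^\mu),P_i)\bigr] - \Delta W\sum_{i=1}^r \beta_i\, h(q_d(t_k+c_i\Delta t;q^\mu),P_i),
\end{equation*}
evaluated at the critical $(q^\mu,P_i)$. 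Invariance of $R_d$ thus reduces to showing (i) invariance of this quadrature sum under a suitable change of variables, and (ii) that the change of variables maps critical points to critical points, with the prescribed transformation rule for the second argument of $R_d$ emerging naturally from that matching.

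To carry this out I would substitute $q'^\mu=\Phi_g(q^\mu)$ (so the constraint $q'^0=\Phi_g(q_k)$ becomes $q^0=q_k$) and let $P'_i$ be the cotangent-lifted image of $P_i$ at the base point $q_d(t_k+c_i\Delta t;q^\mu)$, that is $(q_d(\cdot;q'^\mu),P'_i)=\Phi^{T^*Q}_g(q_d(\cdot;q^\mu),P_i)$. The equivariance hypothesis \eqref{eq:Equivariance of the polynomial} gives $q_d(\cdot;q'^\mu)=\Phi_g\circ q_d(\cdot;q^\mu)$ and $\dot q_d(\cdot;q'^\mu)=T\Phi_g\cdot\dot q_d(\cdot;q^\mu)$; the cotangent lift preserves the natural pairing, so $P'_i\cdot\dot q_d(\cdot;q'^\mu)=P_i\cdot\dot q_d(\cdot;q^\mu)$; and \eqref{eq:Invariance of the Hamiltonians in discrete Nother's theorem} preserves $H$ and $h$. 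Together these imply that the quadrature sum, expressed in the old variables $(q^\mu,P_i)$, is identical in the two problems. The two extremization problems therefore differ only in their boundary terms, $\tilde p_{k+1}\Phi_g(q^s)$ versus $p_{k+1}q^s$, so the critical-point equations agree in all components except the one indexed by $q^s$, which in the transformed problem reads $\tilde p_{k+1}\cdot T\Phi_g(q^s)=\mathrm{RHS}$ versus $p_{k+1}=\mathrm{RHS}$ in the original, with the same right-hand sides. The defining relation $\tilde p_{k+1}=\pi_{Q^*}\circ\Phi^{T^*Q}_g(q_{k+1},p_{k+1})$ unwinds via \eqref{eq:Tangent and cotangent lift actions} to precisely $\tilde p_{k+1}\cdot T\Phi_g(q_{k+1})=p_{k+1}$, so the two systems are equivalent at $q^s=q_{k+1}$ and share the same critical point. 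Evaluating the (already invariant) quadrature sum at that common critical point yields $R_d(\Phi_g(q_k),\tilde p_{k+1})=R_d(q_k,p_{k+1})$.

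The main obstacle I anticipate is notational book-keeping around the distinction between the kinetic pairing $P_i\dot q_d$, which is intrinsically invariant under the cotangent lift because $\dot q_d$ is a genuine tangent vector transforming by $T\Phi_g$, and the endpoint pairing $p_{k+1}q^s$, which is not invariant for a nonlinear $\Phi_g$ because $q^s$ is a point rather than a vector. This asymmetry is precisely what forces $\tilde p_{k+1}$ to be the cotangent-lift image of $p_{k+1}$ at $q_{k+1}$; any other rule would break the matching of the $q^s$ critical-point equation, and the invariance of $R_d$ would fail.
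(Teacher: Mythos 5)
Your proposal is correct and follows essentially the same route as the paper: cancel the boundary term $p_{k+1}q^s$ against $p_{k+1}q_{k+1}$, change variables in the extremization via $\Phi_g$ and the cotangent lift, and use equivariance of $q_d$, invariance of the pairing $P_i\,\dot q_d$, and invariance of $H$ and $h$ to identify the two problems. Your explicit check that the $q^s$-stationarity conditions match precisely because $\tilde p_{k+1}\cdot T\Phi_g(q_{k+1})=p_{k+1}$ is a point the paper leaves implicit (it writes $R_d$ as an extremum of the quadrature sum alone), and it is a welcome clarification rather than a different argument.
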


\begin{proof}
The proof is similar to the proofs of Lemma~3 in \cite{LeokZhang} and Theorem~3 in \cite{OberBlobaum2015}. Let us, however, carefully examine the actions of $G$ on $Q$, $TQ$, and $T^*Q$. Let $q_k \in Q$ and $p_{k+1} \in Q^*$, and let $q_{k+1}=D_2H^+_d(q_k,p_{k+1})$. First, note that for the stochastic discrete Hamiltonian \eqref{eq:Discrete Hamiltonian}, we have 

\begin{align}
\label{eq:Generalzed discrete Lagrangian for the Galerkin discrete Hamiltonian}
R(q_k, p_{k+1}) &= \ext_{ \substack{q^1,\ldots,q^s \in Q \\ P_1, \ldots, P_r \in Q^* \\ q^0 = q_k} } \bigg\{ \Delta t \sum_{i=1}^r \alpha_i \Big[ P_i \dot q_d(t_k+c_i\Delta t; q^\mu) - H\big(q_d(t_k+c_i\Delta t; q^\mu),P_i\big) \Big] \nonumber \\
&\phantom{= p_{k+1}q^saaaaaaaaaa} - \Delta W \sum_{i=1}^r \beta_i  h\big(q_d(t_k+c_i\Delta t; q^\mu),P_i\big) \bigg\},
\end{align}

\noindent
where we used \eqref{eq:Stochastic Galerkin variational integrator 5}. Consider $\tilde q_k = \Phi_g(q_k)$ and $(\tilde q_{k+1}, \tilde p_{k+1}) = \Phi^{T^*Q}_g(q_{k+1},p_{k+1})$ for $g \in G$, and calculate \eqref{eq:Generalzed discrete Lagrangian for the Galerkin discrete Hamiltonian} for the transformed values $\tilde q_k$ and $\tilde p_{k+1}$:

\begin{align}
R(\tilde q_k, \tilde p_{k+1}) &= \ext_{ \substack{\tilde q^1,\ldots,\tilde q^s \in Q \\ \tilde P_1, \ldots, \tilde P_r \in Q^* \\ \tilde q^0 = \tilde q_k} } \bigg\{ \Delta t \sum_{i=1}^r \alpha_i \Big[ \tilde P_i \dot q_d(t_k+c_i\Delta t; \tilde q^\mu) - H\big(q_d(t_k+c_i\Delta t; \tilde q^\mu),\tilde P_i\big) \Big] \nonumber \\
&\phantom{= p_{k+1}q^saaaaaaaaaa} - \Delta W \sum_{i=1}^r \beta_i  h\big(q_d(t_k+c_i\Delta t; \tilde q^\mu),\tilde P_i\big) \bigg\}.
\end{align}

\noindent
Let us perform a change of variables with respect to which we extremize. First, define $q^\mu = \Phi_{g^{-1}}(\tilde q^\mu)$, so that $\tilde q^\mu = \Phi_{g}(q^\mu)$ for $\mu=0,\ldots,s$. From \eqref{eq:Equivariance of the polynomial} we have $q_d(t_k+c_i\Delta t; \tilde q^\mu) = \Phi_g ( q_d(t_k+c_i\Delta t; q^\mu))$, which we use to define $P_i$ by $\big(q_d(t_k+c_i\Delta t; \tilde q^\mu),\tilde P_i\big) = \Phi^{T*Q}_g\big(q_d(t_k+c_i\Delta t; q^\mu),P_i\big)$ for $i=1,\ldots,r$. Since $\Phi_g$ and $\Phi^{T*Q}_g$ are bijective, extremization with respect to $q^\mu$ and $P_i$ is equivalent to extremization with respect to $\tilde q^\mu$ and $\tilde P_i$, and $\tilde q^0 = \tilde q_k$ implies  $q^0 = q_k$. Moreover, from \eqref{eq:Equivariance of the polynomial} and \eqref{eq:Tangent and cotangent lift actions} we have that $\tilde P_i \dot q_d(t_k+c_i\Delta t; \tilde q^\mu) = P_i \dot q_d(t_k+c_i\Delta t; q^\mu)$. Finally, the invariance of the Hamiltonians implies

\begin{align}
\label{eq:Transformed generalzed discrete Lagrangian is invariant}
R(\tilde q_k, \tilde p_{k+1}) &= \ext_{ \substack{q^1,\ldots,q^s \in Q \\ P_1, \ldots, P_r \in Q^* \\ q^0 = q_k} } \bigg\{ \Delta t \sum_{i=1}^r \alpha_i \Big[ P_i \dot q_d(t_k+c_i\Delta t; q^\mu) - H\big(q_d(t_k+c_i\Delta t; q^\mu),P_i\big) \Big] \nonumber \\
&\phantom{= p_{k+1}q^saaaaaaaaaa} - \Delta W \sum_{i=1}^r \beta_i  h\big(q_d(t_k+c_i\Delta t; q^\mu),P_i\big) \bigg\} = R(q_k, p_{k+1}),
\end{align}

\noindent
which completes the proof.\\
\end{proof}

\noindent
{\bf Remark:} One can easily verify that the interpolating polynomial \eqref{eq:Polynomial q(t)} is in particular equivariant with respect to linear actions (translations, rotations, etc.), therefore the stochastic Galerkin variational integrator \eqref{eq:Stochastic Galerkin variational integrator} preserves quadratic momentum maps (such as linear and angular momentum) related to linear symmetries of the Hamiltonians $H$ and $h$.

\subsection{Stochastic symplectic partitioned Runge-Kutta methods}
\label{sec:Stochastic symplectic partitioned Runge-Kutta methods}

A general class of stochastic Runge-Kutta methods for Stratonovich ordinary differential equations was introduced and analyzed in \cite{Burrage1996}, \cite{Burrage1998}, and \cite{Burrage2000}. These ideas were later used by Ma \& Ding \& Ding \cite{MaDing2012} and Ma \& Ding \cite{MaDing2015} to construct symplectic Runge-Kutta methods for stochastic Hamiltonian systems. An $s$-stage stochastic symplectic partitioned Runge-Kutta method for \eqref{eq: Stochastic Hamiltonian system} is defined in \cite{MaDing2015} by the following system:

\begin{subequations}
\label{eq:SPRK for stochastic Hamiltonian systems}
\begin{align}
\label{eq:SPRK for stochastic Hamiltonian systems 1}
Q_i &= q_k + \Delta t \sum_{j=1}^s a_{ij} \frac{\partial H}{\partial p}(Q_j,P_j) + \Delta W \sum_{j=1}^s b_{ij} \frac{\partial h}{\partial p}(Q_j,P_j), \quad \qquad i=1,\ldots,s,  \\
\label{eq:SPRK for stochastic Hamiltonian systems 2}
P_i &= p_k - \Delta t \sum_{j=1}^s \bar a_{ij} \frac{\partial H}{\partial q}(Q_j,P_j) - \Delta W \sum_{j=1}^s \bar b_{ij} \frac{\partial h}{\partial q}(Q_j,P_j), \quad \qquad i=1,\ldots,s, \\
\label{eq:SPRK for stochastic Hamiltonian systems 3}
q_{k+1} &= q_k + \Delta t \sum_{i=1}^s \alpha_i \frac{\partial H}{\partial p}(Q_i,P_i) + \Delta W \sum_{i=1}^s \beta_i \frac{\partial h}{\partial p}(Q_i,P_i),\\
\label{eq:SPRK for stochastic Hamiltonian systems 4}
p_{k+1} &= p_k - \Delta t \sum_{i=1}^s \alpha_i \frac{\partial H}{\partial q}(Q_i,P_i) - \Delta W \sum_{i=1}^s \beta_i \frac{\partial h}{\partial q}(Q_i,P_i),
\end{align}
\end{subequations}

\noindent
where $Q_i$ and $P_i$ for $i=1,\ldots,s$ are the position and momentum internal stages, respectively, and the coefficients of the method $a_{ij}$, $\bar a_{ij}$, $b_{ij}$, $\bar b_{ij}$, $\alpha_i$, $\beta_i$ satisfy the symplectic conditions

\begin{subequations}
\label{eq:Symplectic conditions for SPRK}
\begin{align}
\label{eq:Symplectic conditions for SPRK 1}
\alpha_i \bar a_{ij} + \alpha_j a_{ji} &= \alpha_i \alpha_j, \\
\label{eq:Symplectic conditions for SPRK 2}
\beta_i \bar a_{ij} + \alpha_j b_{ji} &= \beta_i \alpha_j, \\
\label{eq:Symplectic conditions for SPRK 3}
\alpha_i \bar b_{ij} + \beta_j a_{ji} &= \alpha_i \beta_j, \\
\label{eq:Symplectic conditions for SPRK 4}
\beta_i \bar b_{ij} + \beta_j b_{ji} &= \beta_i \beta_j,
\end{align}
\end{subequations}

\noindent
for $i,j=1,\ldots,s$. We now prove that in the special case when $r=s$, the stochastic Galerkin variational integrator \eqref{eq:Stochastic Galerkin variational integrator} is equivalent to the stochastic symplectic partitioned Runge-Kutta method \eqref{eq:SPRK for stochastic Hamiltonian systems}.

\begin{thm}
\label{thm:Stochastic Galerkin variational integrator as an SPRK method}
Let $r=s$ and let $\bar l_{i,s-1}(\tau)$ for $i=1,\ldots, s$ denote the Lagrange polynomials of degree $s-1$ associated with the quadrature points $0\leq c_1 < \ldots <c_s \leq 1$. Moreover, let the weights $\alpha_i$ be given by

\begin{equation}
\label{eq:Weights alpha_i in terms of Lagrange polynomials}
\alpha_i = \int_0^1 \bar l_{i,s-1}(\tau)\,d\tau,
\end{equation}

\noindent
and assume $\alpha_i \not = 0$ for $i=1,\ldots, s$. Then the stochastic Galerkin Hamiltonian variational integrator \eqref{eq:Stochastic Galerkin variational integrator} is equivalent to the stochastic partitioned Runge-Kutta method \eqref{eq:SPRK for stochastic Hamiltonian systems} with the coefficients

\begin{subequations}
\label{eq:Coefficients of the SPRK}
\begin{align}
\label{eq:Coefficients of the SPRK 1}
a_{ij}&=\int_0^{c_i} \bar l_{j,s-1}(\tau)\,d\tau, \\
\label{eq:Coefficients of the SPRK 2}
\bar a_{ij} &= \frac{\alpha_j(\alpha_i-a_{ji})}{\alpha_i}, \\
\label{eq:Coefficients of the SPRK 3}
b_{ij} &= \frac{\beta_j a_{ij}}{\alpha_j}, \\
\label{eq:Coefficients of the SPRK 4}
\bar b_{ij} &= \frac{\beta_j(\alpha_i-a_{ji})}{\alpha_i},
\end{align}
\end{subequations}

\noindent
for $i,j=1,\ldots,s$.

\end{thm}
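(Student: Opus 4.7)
The plan is to identify the internal stages of the Runge-Kutta scheme with the collocation data of the Galerkin integrator via $Q_i := q_d(t_k + c_i\Delta t)$, with $P_i$ as already given in the construction. The first step is to handle the position-side equations \eqref{eq:SPRK for stochastic Hamiltonian systems 1} and \eqref{eq:SPRK for stochastic Hamiltonian systems 3}. Since $q_d$ is a polynomial of degree $s$, its derivative $\dot q_d(t_k + \tau\Delta t)$ is a polynomial of degree $s-1$ in $\tau$, and because $r = s$, it is uniquely recovered from its values at the quadrature points via Lagrange interpolation: $\dot q_d(t_k+\tau\Delta t) = \sum_{j=1}^s \dot q_d(t_k+c_j\Delta t)\,\bar l_{j,s-1}(\tau)$. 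Integrating from $0$ to $c_i$ (respectively to $1$) then yields
\begin{equation*}
Q_i - q_k = \Delta t \sum_{j=1}^s a_{ij}\, \dot q_d(t_k+c_j\Delta t), \qquad q_{k+1} - q_k = \Delta t \sum_{j=1}^s \alpha_j\, \dot q_d(t_k+c_j\Delta t),
\end{equation*}
with coefficients given by \eqref{eq:Coefficients of the SPRK 1} and \eqref{eq:Weights alpha_i in terms of Lagrange polynomials}. Substituting \eqref{eq:Stochastic Galerkin variational integrator 4} for $\dot q_d(t_k+c_j\Delta t)$ and grouping the drift and noise terms produces \eqref{eq:SPRK for stochastic Hamiltonian systems 1} and \eqref{eq:SPRK for stochastic Hamiltonian systems 3}, with $b_{ij} = \beta_j a_{ij}/\alpha_j$ as claimed.

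The momentum-side equations require a less transparent manipulation, which is the core idea of the proof: the $s+1$ equations \eqref{eq:Stochastic Galerkin variational integrator 1}--\eqref{eq:Stochastic Galerkin variational integrator 3} are really the weak form of a single relation tested against a basis of polynomials of degree $s$. Concretely, for any polynomial $\phi$ of degree at most $s$, I multiply the $\mu$th equation by $\phi(d_\mu)$ and sum over $\mu = 0, 1, \ldots, s$. The Lagrange interpolation identities $\sum_\mu \phi(d_\mu)\, l_{\mu,s}(\tau) = \phi(\tau)$ and $\sum_\mu \phi(d_\mu)\, \dot l_{\mu,s}(\tau) = \dot\phi(\tau)$ collapse the right-hand side to leave the single identity
\begin{equation*}
\phi(1)\,p_{k+1} - \phi(0)\,p_k = \sum_{i=1}^s \alpha_i P_i\, \dot\phi(c_i) - \Delta t \sum_{i=1}^s \alpha_i\, \phi(c_i)\, \frac{\partial H}{\partial q}(Q_i,P_i) - \Delta W \sum_{i=1}^s \beta_i\, \phi(c_i)\, \frac{\partial h}{\partial q}(Q_i,P_i).
\end{equation*}

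All the remaining work consists in testing this weak form against well-chosen polynomials. Taking $\phi \equiv 1$ forces $\dot\phi(c_i) = 0$ and $\phi(c_i) = 1$, immediately recovering \eqref{eq:SPRK for stochastic Hamiltonian systems 4}. For each $j = 1, \ldots, s$, taking $\phi_j(\tau) := \int_\tau^1 \bar l_{j,s-1}(\sigma)\,d\sigma$ --- a polynomial of degree $s$ satisfying $\phi_j(1)=0$, $\phi_j(0)=\alpha_j$, $\dot\phi_j(c_i) = -\delta_{ij}$, and $\phi_j(c_i) = \alpha_j - a_{ij}$ --- extracts $P_j$ cleanly and, after division by $\alpha_j \neq 0$, produces \eqref{eq:SPRK for stochastic Hamiltonian systems 2} with $\bar a_{ij}$ and $\bar b_{ij}$ of the claimed form \eqref{eq:Coefficients of the SPRK 2}, \eqref{eq:Coefficients of the SPRK 4}. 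The principal obstacle is recognizing that the collocation degrees of freedom $P_i$ are dual to the values of $\dot\phi$ at the $c_i$; once this points to the antiderivative ansatz for $\phi_j$, the rest is bookkeeping. As an independent consistency check, a short algebraic computation confirms that the coefficients \eqref{eq:Coefficients of the SPRK} automatically satisfy the symplectic conditions \eqref{eq:Symplectic conditions for SPRK}, in agreement with the symplecticity of the Galerkin scheme already established.
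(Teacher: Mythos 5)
Your proposal is correct and follows essentially the same route as the paper: the position-side argument (Lagrange interpolation of $\dot q_d$, integration to $\tau=c_i$ and $\tau=1$, substitution of \eqref{eq:Stochastic Galerkin variational integrator 4}) is identical, and your ``weak form tested against polynomials of degree $\le s$'' is just a repackaging of the paper's linear combinations --- your choice $\phi\equiv 1$ is the paper's summation of \eqref{eq:Stochastic Galerkin variational integrator 1}--\eqref{eq:Stochastic Galerkin variational integrator 3}, and your $\phi_j(\tau)=\int_\tau^1\bar l_{j,s-1}(\sigma)\,d\sigma$ is, up to sign, exactly the polynomial $\int_0^\tau\bar l_{j,s-1}(\xi)\,d\xi-\alpha_j$ whose Lagrange coefficients $m_{j\mu}$ the paper uses as weights. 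The weak-form phrasing arguably makes the choice of test function better motivated, but the computation is the same.
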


\begin{proof}
The proof follows the main steps of the proof of Theorem~2.6.2 in \cite{MarsdenWestVarInt}. The time derivative $\dot q_d$ is a polynomial of degree $s-1$. Therefore, it can be uniquely expressed in terms of the Lagrange polynomials $\bar l_{j,s-1}(\tau)$ as

\begin{equation}
\label{eq:Time derivative of q_d in terms of the Lagrange polynomials}
\dot q_d(t_k+\tau \Delta t) = \sum_{j=1}^s \dot q_d(t_k+c_j \Delta t) \bar l_{j,s-1}(\tau).
\end{equation}

\noindent
Upon integrating with respect to time, we find

\begin{equation}
\label{eq:q_d in terms of the Lagrange polynomials}
q_d(t_k+\tau \Delta t) = q_k + \Delta t \sum_{j=1}^s \dot q_d(t_k+c_j \Delta t) \int_0^\tau \bar l_{j,s-1}(\xi)\,d\xi,
\end{equation}

\noindent
where we have used $q^0=q_k$. For $\tau=1$ this gives

\begin{equation}
\label{eq:q_d for tau=1 in terms of the Lagrange polynomials}
q_{k+1} = q_k + \Delta t \sum_{j=1}^s \alpha_j \dot q_d(t_k+c_j \Delta t),
\end{equation}

\noindent
where we have used $q^s = q_{k+1}$ and \eqref{eq:Weights alpha_i in terms of Lagrange polynomials}. Define the internal stages $Q_j\equiv q_d(t_k+c_j \Delta t)$. Then, upon using \eqref{eq:Stochastic Galerkin variational integrator 4}, equation \eqref{eq:q_d for tau=1 in terms of the Lagrange polynomials} becomes \eqref{eq:SPRK for stochastic Hamiltonian systems 3}. For $\tau=c_i$ equation \eqref{eq:q_d in terms of the Lagrange polynomials} gives

\begin{equation}
\label{eq:q_d for tau=c_i in terms of the Lagrange polynomials}
Q_i = q_k + \Delta t \sum_{j=1}^s a_{ij} \dot q_d(t_k+c_j \Delta t),
\end{equation}

\noindent
where $a_{ij}$ is defined by \eqref{eq:Coefficients of the SPRK 1}. Upon substituting \eqref{eq:Stochastic Galerkin variational integrator 4}, equation \eqref{eq:q_d for tau=c_i in terms of the Lagrange polynomials} becomes \eqref{eq:SPRK for stochastic Hamiltonian systems 1}, where $b_{ij}$ is defined by \eqref{eq:Coefficients of the SPRK 3}. Next, sum equations \eqref{eq:Stochastic Galerkin variational integrator 1}, \eqref{eq:Stochastic Galerkin variational integrator 2}, and \eqref{eq:Stochastic Galerkin variational integrator 3}. Noting that $\sum_{\mu=0}^s l_{\mu,s}(\tau) = 1$, this gives equation \eqref{eq:SPRK for stochastic Hamiltonian systems 4}. Finally, we note that for each $i=1,\ldots,s$ we have a unique decomposition

\begin{equation}
\int_0^\tau \bar l_{i,s-1}(\xi)\,d\xi - \alpha_i = \sum_{\mu=0}^s m_{i \mu} l_{\mu,s}(\tau),
\end{equation}

\noindent
since the left-hand side is a polynomial of degree $s$, and therefore it can be uniquely expressed as a linear combination of the Lagrange polynomials $l_{\mu,s}(\tau)$ with the coefficients $m_{i\mu}$. Evaluating this identity at $\tau=d_0=0$, $\tau=d_s=1$, and differentiating it with respect to $\tau$ yield the following three equations, respectively,

\begin{align}
\label{eq:Identities for the coefficients of the linear combination}
-\alpha_i &= \sum_{\mu=0}^s m_{i \mu} l_{\mu,s}(0) = m_{i0}, \nonumber \\
0&= \sum_{\mu=0}^s m_{i \mu} l_{\mu,s}(1) = m_{is}, \nonumber \\
\bar l_{i,s-1}(\tau) &= \sum_{\mu=0}^s m_{i \mu} \dot l_{\mu,s}(\tau).
\end{align}

\noindent
We form a linear combination of equations \eqref{eq:Stochastic Galerkin variational integrator 1}, \eqref{eq:Stochastic Galerkin variational integrator 2}, and \eqref{eq:Stochastic Galerkin variational integrator 3} with the coefficients $m_{j0}$, $m_{j\mu}$, and $m_{js}$, respectively. By using the identities \eqref{eq:Identities for the coefficients of the linear combination} and rearranging the terms, we obtain \eqref{eq:SPRK for stochastic Hamiltonian systems 4}, where $\bar a_{ij}$ and $\bar b_{ij}$ are defined by \eqref{eq:Coefficients of the SPRK 2} and \eqref{eq:Coefficients of the SPRK 4}, respectively. One can easily verify that the coefficients \eqref{eq:Coefficients of the SPRK} satisfy the conditions \eqref{eq:Symplectic conditions for SPRK}.\\
\end{proof}

\subsection{Examples}
\label{sec:Examples}

In the construction of the integrator \eqref{eq:Stochastic Galerkin variational integrator} we may choose the degree $s$ of the approximating polynomials and the quadrature rules $(\alpha_i, c_i)_{i=1}^r$ and $(\beta_i, c_i)_{i=1}^r$. In the deterministic case, the higher the degree of the polynomials and the higher the order of the quadrature rule, then the higher the order of convergence of the resulting integrator (see \cite{OberBlobaum2015}). In our case, however, as explained in Section~\ref{sec:Construction of the integrator}, we cannot in general achieve mean-square order of convergence higher than 1.0, because we only used $\Delta W$ in \eqref{eq:Approximate stochastic action functional}. Since the system \eqref{eq:Stochastic Galerkin variational integrator} requires solving $(s+r)N$ equations for $(s+r)N$ variables, from the computational point of view it makes sense to only consider methods with low values of $s$ and $r$. In this work we focus on the following classical numerical integration formulas (see \cite{HLWGeometric}, \cite{HWODE1}, \cite{HWODE2}):

\begin{itemize}
	\item Gauss-Legendre quadratures (Gau): midpoint rule ($r=1$), etc.
	\item Lobatto quadratures (Lob): trapezoidal rule ($r=2$), Simpson's rule ($r=3$), etc.
	\item Open trapezoidal rule (Otr; $r=2$)
	\item Milne's rule (Mil; $r=3$)
	\item Rectangle rule (Rec; $r=1$)---only in the case when $h=h(q)$.
\end{itemize}

In \cite{OberBlobaum2015} notation $PsNrQu$ was proposed to denote a Galerkin variational integrator based on polynomials of degree $s$ and a quadrature rule of order $u$ with $r$ quadrature points. We adopt a similar notation, keeping in mind that $u$ denotes the \emph{classical} order of the used quadrature rule---when the rule is applied to a stochastic integral, as in \eqref{eq:Approximate stochastic action functional}, its classical order is not attained in general. We also use a three-letter code to identify which integration formula is used. For example, $P2N2Q4Gau$ denotes the integrator defined by \eqref{eq:Stochastic Galerkin variational integrator} using polynomials of degree 2 and the Gauss-Legendre quadrature formula of classical order 4 with 2 quadrature points for both the Lebesgue and Stratonovich integrals in \eqref{eq:Approximate stochastic action functional}. If two different quadrature rules are used, we first write the rule applied to the Lebesgue integral, followed by the rule applied to the Stratonovich integral, e.g., $P1N1Q2GauN2Q2Lob$. Below we give several examples of integrators obtained by using polynomials of degree $s=1,2$ and the quadrature rules listed above.

\subsubsection{General Hamiltonian function $h(q,p)$}
\label{sec:General stochastic Hamiltonian}

For a general Hamiltonian $h=h(q,p)$, equation \eqref{eq:Stochastic Galerkin variational integrator 4}, which represents the discretization of the Legendre transform, needs to contain both $\partial H / \partial p$ and $\partial h / \partial p$ terms to correctly approximate the continuous system. Therefore, we only consider methods with $\alpha_i=\beta_i\not = 0$ for all $i=1,\ldots,r$. A few examples of interest are listed below.

\begin{enumerate}
	\item $P1N1Q2Gau$ (\emph{Stochastic midpoint method}) \\ Using the midpoint rule ($r=1$, $c_1=1/2$, $\alpha_1=\beta_1=1$) together with polynomials of degree $s=1$ gives a stochastic Runge-Kutta method \eqref{eq:SPRK for stochastic Hamiltonian systems} with $a_{11}=\bar a_{11}=b_{11}=\bar b_{11}=1/2$. Noting that $Q_1=(q_k+q_{k+1})/2$ and $P_1=(p_k+p_{k+1})/2$, this method can be written as
	
	\begin{align}
	\label{eq:Stochastic midpoint method}
	q_{k+1} &= q_k + \frac{\partial H}{\partial p} \bigg(\frac{q_k+q_{k+1}}{2},\frac{p_k+p_{k+1}}{2} \bigg)\Delta t 
	               + \frac{\partial h}{\partial p} \bigg(\frac{q_k+q_{k+1}}{2},\frac{p_k+p_{k+1}}{2} \bigg)\Delta W, \nonumber \\
	p_{k+1} &= p_k - \frac{\partial H}{\partial q} \bigg(\frac{q_k+q_{k+1}}{2},\frac{p_k+p_{k+1}}{2} \bigg)\Delta t 
	               - \frac{\partial h}{\partial q} \bigg(\frac{q_k+q_{k+1}}{2},\frac{p_k+p_{k+1}}{2} \bigg)\Delta W.
	\end{align}
	
	\noindent
	The stochastic midpoint method was considered in \cite{MaDing2015} and \cite{MilsteinRepin}. It is an implicit method and in general one has to solve $2N$ equations for $2N$ unknowns. However, if the Hamiltonians are separable, that is, $H(q,p)=T_0(p)+U_0(q)$ and $h(q,p)=T_1(p)+U_1(q)$, then $p_{k+1}$ from the second equation can be substituted into the first one. In that case only $N$ nonlinear equations have to be solved for $q_{k+1}$.
	
	\item $P2N2Q2Lob$ (\emph{Stochastic St{\"o}rmer-Verlet method}) \\ If the trapezoidal rule ($r=2$, $c_1=0$, $c_2=1$, $\alpha_1=\beta_1=1/2$, $\alpha_2=\beta_2=1/2$) is used with polynomials of degree $s=2$, we obtain another partitioned Runge-Kutta method \eqref{eq:SPRK for stochastic Hamiltonian systems} with $a_{11}=a_{12}=0$, $a_{21}=a_{22}=1/2$, $\bar a_{11}=\bar a_{21}=1/2$, $\bar a_{12}=\bar a_{22}=0$, $(b_{ij})=(a_{ij})$, $(\bar b_{ij})=(\bar a_{ij})$. Noting that $Q_1=q_k$, $Q_2=q_{k+1}$, and $P_1=P_2$, this method can be more efficiently written as
	
	\begin{align}
	\label{eq:Stochastic Stormer-Verlet method}
	P_1 &= p_k - \frac{1}{2} \frac{\partial H}{\partial q} \big(q_k, P_1 \big)\Delta t 
	           - \frac{1}{2} \frac{\partial h}{\partial q} \big(q_k, P_1 \big)\Delta W, \nonumber \\
	q_{k+1}&= q_k + \frac{1}{2} \frac{\partial H}{\partial p} \big(q_k, P_1 \big)\Delta t
	              + \frac{1}{2} \frac{\partial H}{\partial p} \big(q_{k+1}, P_1 \big)\Delta t
	              + \frac{1}{2} \frac{\partial h}{\partial p} \big(q_k, P_1 \big)\Delta W
	              + \frac{1}{2} \frac{\partial h}{\partial p} \big(q_{k+1}, P_1 \big)\Delta W, \nonumber \\
	p_{k+1}&= P_1 - \frac{1}{2} \frac{\partial H}{\partial q} \big(q_{k+1}, P_1 \big)\Delta t 
	              - \frac{1}{2} \frac{\partial h}{\partial q} \big(q_{k+1}, P_1 \big)\Delta W.
	\end{align}
	
	\noindent
	This method is a stochastic generalization of the St{\"o}rmer-Verlet method (see \cite{HLWGeometric}) and was considered in \cite{MaDing2015}. It is particularly efficient, because the first equation can be solved separately from the second one, and the last equation is an explicit update. Moreover, if the Hamiltonians are separable, this method becomes fully explicit.
	
	\item $P1N2Q2Lob$ (\emph{Stochastic trapezoidal method}) \\This integrator is based on polynomials of degree $s=1$ with control points $d_0=0$, $d_1=1$, and the trapezoidal rule. Equations \eqref{eq:Stochastic Galerkin variational integrator} take the form
	
	\begin{align}
	\label{eq:Stochastic trapezoidal method}
	p_k &= \frac{1}{2}(P_1+P_2) + \frac{1}{2} \frac{\partial H}{\partial q} \big(q_k, P_1 \big)\Delta t 
	                           + \frac{1}{2} \frac{\partial h}{\partial q} \big(q_k, P_1 \big)\Delta W, \nonumber \\
	p_{k+1} &= \frac{1}{2}(P_1+P_2) - \frac{1}{2} \frac{\partial H}{\partial q} \big(q_{k+1}, P_2 \big)\Delta t 
	                                - \frac{1}{2} \frac{\partial h}{\partial q} \big(q_{k+1}, P_2 \big)\Delta W, \nonumber \\
	q_{k+1} &= q_k + \frac{\partial H}{\partial p} \big(q_k,P_1 \big)\Delta t 
	               + \frac{\partial h}{\partial p} \big(q_k,P_1 \big)\Delta W, \nonumber \\
	q_{k+1} &= q_k + \frac{\partial H}{\partial p} \big(q_{k+1},P_2 \big)\Delta t 
	               + \frac{\partial h}{\partial p} \big(q_{k+1},P_2 \big)\Delta W.
	\end{align}
	
	\noindent
	This integrator is a stochastic generalization of the trapezoidal method for deterministic systems (see \cite{MarsdenWestVarInt}). One may easily verify that if the Hamiltonians are separable, that is, $H(q,p)=T_0(p)+U_0(q)$ and $h(q,p)=T_1(p)+U_1(q)$, then $P_1=P_2$ and \eqref{eq:Stochastic trapezoidal method} is equivalent to the St{\"o}rmer-Verlet method \eqref{eq:Stochastic Stormer-Verlet method} and is fully explicit. 
	
	\item $P1N3Q4Lob$ \\If we use Simpson's rule ($r=3$, $c_1=0$, $c_2=1/2$, $c_3=1$, $\alpha_1=1/6$, $\alpha_2=2/3$, $\alpha_3=1/6$, $\beta_i=\alpha_i$), the resulting integrator \eqref{eq:Stochastic Galerkin variational integrator} requires solving simultaneously $4N$ nonlinear equations, so it is computationally expensive in general. However, if the Hamiltonians $H$ and $h$ are separable, then \eqref{eq:Stochastic Galerkin variational integrator 4} implies $P_1=P_2=P_3$, and the integrator can be rewritten as
	
	\begin{align}
	\label{eq:Simpson's rule P1N3Q4Lob}
	q_{k+1}&= q_k + \frac{\partial T_0}{\partial p} \big(P_1 \big)\Delta t
	              + \frac{\partial T_1}{\partial p} \big(P_1 \big)\Delta W, \nonumber \\
	p_{k+1} &= P_1 - \frac{1}{3} \frac{\partial U_0}{\partial q} \bigg(\frac{q_k+q_{k+1}}{2}\bigg)\Delta t
	               - \frac{1}{6} \frac{\partial U_0}{\partial q} \big(q_{k+1}\big)\Delta t 
					       - \frac{1}{3} \frac{\partial U_1}{\partial q} \bigg(\frac{q_k+q_{k+1}}{2}\bigg)\Delta W
	               - \frac{1}{6} \frac{\partial U_1}{\partial q} \big(q_{k+1}\big)\Delta W,
	\end{align}
	
	\noindent
	where
	
	\begin{align}
		P_1 &= p_k - \frac{1}{6} \frac{\partial U_0}{\partial q} \big(q_k\big)\Delta t 
					   - \frac{1}{3} \frac{\partial U_0}{\partial q} \bigg(\frac{q_k+q_{k+1}}{2}\bigg)\Delta t
	           - \frac{1}{6} \frac{\partial U_1}{\partial q} \big(q_k\big)\Delta W 
					   - \frac{1}{3} \frac{\partial U_1}{\partial q} \bigg(\frac{q_k+q_{k+1}}{2}\bigg)\Delta W,
	\end{align}
	
	\noindent
	and $H(q,p)=T_0(p)+U_0(q)$ and $h(q,p)=T_1(p)+U_1(q)$. In this case only the first equation needs to be solved for $q_{k+1}$, and then the second equation is an explicit update.
	
	\item $P1N2Q2Otr$ \\Like the method \eqref{eq:Stochastic trapezoidal method}, this integrator is based on polynomials of degree $s=1$ with control points $d_0=0$, $d_1=1$, but uses the open trapezoidal rule ($r=2$, $c_1=1/3$, $c_2=2/3$, $\alpha_1=1/2$, $\alpha_2=1/2$, $\beta_i=\alpha_i$). Equations \eqref{eq:Stochastic Galerkin variational integrator} take the form
	
	\begin{align}
	\label{eq:Stochastic open trapezoidal method}
	p_k &= \frac{1}{2}(P_1+P_2) + \frac{1}{3} \frac{\partial H}{\partial q} \bigg(\frac{q_{k+1}+2q_k}{3}, P_1 \bigg)\Delta t 
	                            + \frac{1}{6} \frac{\partial H}{\partial q} \bigg(\frac{2q_{k+1}+q_k}{3}, P_2 \bigg)\Delta t \nonumber \\
	    &\phantom{=\frac{1}{2}(P_1+P_2)}  + \frac{1}{3} \frac{\partial h}{\partial q} \bigg(\frac{q_{k+1}+2q_k}{3}, P_1 \bigg)\Delta W
	                                      + \frac{1}{6} \frac{\partial h}{\partial q} \bigg(\frac{2q_{k+1}+q_k}{3}, P_2 \bigg)\Delta W, \nonumber \\
	p_{k+1} &= \frac{1}{2}(P_1+P_2) - \frac{1}{6} \frac{\partial H}{\partial q} \bigg(\frac{q_{k+1}+2q_k}{3}, P_1 \bigg)\Delta t 
	                            - \frac{1}{3} \frac{\partial H}{\partial q} \bigg(\frac{2q_{k+1}+q_k}{3}, P_2 \bigg)\Delta t \nonumber \\
	    &\phantom{=\frac{1}{2}(P_1+P_2)}  - \frac{1}{6} \frac{\partial h}{\partial q} \bigg(\frac{q_{k+1}+2q_k}{3}, P_1 \bigg)\Delta W
	                                      - \frac{1}{3} \frac{\partial h}{\partial q} \bigg(\frac{2q_{k+1}+q_k}{3}, P_2 \bigg)\Delta W, \nonumber \\
	q_{k+1} &= q_k + \frac{\partial H}{\partial p} \bigg(\frac{q_{k+1}+2q_k}{3}, P_1 \bigg)\Delta t 
	               + \frac{\partial h}{\partial p} \bigg(\frac{q_{k+1}+2q_k}{3}, P_1 \bigg)\Delta W, \nonumber \\
	q_{k+1} &= q_k + \frac{\partial H}{\partial p} \bigg(\frac{2q_{k+1}+q_k}{3}, P_2 \bigg)\Delta t 
	               + \frac{\partial h}{\partial p} \bigg(\frac{2q_{k+1}+q_k}{3}, P_2 \bigg)\Delta W.
	\end{align}
	
	\noindent
	In general one has to solve the first, third, and fourth equation simultaneously ($3N$ equations for $3N$ variables). In case of separable Hamiltonians we have $P_1=P_2$ and one only needs to solve $N$ nonlinear equations: $P_1$ can be explicitly calculated from the first equation and substituted into the third one, and the resulting nonlinear equation then has to be solved for $q_{k+1}$.
	
	\item $P2N2Q2Otr$ \\ If the open trapezoidal rule is used with polynomials of degree $s=2$, we obtain yet another partitioned Runge-Kutta method \eqref{eq:SPRK for stochastic Hamiltonian systems} with $a_{11}=\bar a_{22}=1/2$, $a_{12}=\bar a_{12}=-1/6$, $a_{21}=\bar a_{21}=2/3$, $a_{22}=\bar a_{11}=0$, $(b_{ij})=(a_{ij})$, $(\bar b_{ij})=(\bar a_{ij})$. Inspecting equations \eqref{eq:SPRK for stochastic Hamiltonian systems} we see that, for example, $Q_2$ is explicitly given in terms of $Q_1$ and $P_1$, therefore one only needs to solve $3N$ equations for the $3N$ variables $Q_1$, $P_1$, $P_2$, and the remaining equations are explicit updates. This method further simplifies for separable Hamiltonians $H$ and $h$: $Q_1$ and $Q_2$ are now explicitly given in terms of $P_1$ and $P_2$, and the nonlinear equation for $P_1$ can be solved separately from the nonlinear equation for $P_2$.
	
	\item $P1N3Q4Mil$ \\A method similar to \eqref{eq:Simpson's rule P1N3Q4Lob} is obtained if we use Milne's rule ($r=3$, $c_1=1/4$, $c_2=1/2$, $c_3=3/4$, $\alpha_1=2/3$, $\alpha_2=-1/3$, $\alpha_3=2/3$, $\beta_i=\alpha_i$) instead of Simpson's rule. The resulting integrator is also computationally expensive in general, but if the Hamiltonians $H$ and $h$ are separable, then \eqref{eq:Stochastic Galerkin variational integrator 4} implies $P_1=P_2=P_3$, and the integrator can be rewritten as
	
	\begin{align}
	\label{eq:Milne's rule P1N3Q4Mil}
	q_{k+1}= q_k &+ \frac{\partial T_0}{\partial p} \big(P_1 \big)\Delta t
	              + \frac{\partial T_1}{\partial p} \big(P_1 \big)\Delta W, \nonumber \\
	p_{k+1} = p_k &- \frac{2}{3} \frac{\partial U_0}{\partial q} \bigg(\frac{3 q_k+q_{k+1}}{4}\bigg)\Delta t 
					   + \frac{1}{3} \frac{\partial U_0}{\partial q} \bigg(\frac{q_k+q_{k+1}}{2}\bigg)\Delta t
					   - \frac{2}{3} \frac{\partial U_0}{\partial q} \bigg(\frac{q_k+3 q_{k+1}}{4}\bigg)\Delta t \nonumber \\
	           &- \frac{2}{3} \frac{\partial U_1}{\partial q} \bigg(\frac{3 q_k+q_{k+1}}{4}\bigg)\Delta W 
					   + \frac{1}{3} \frac{\partial U_1}{\partial q} \bigg(\frac{q_k+q_{k+1}}{2}\bigg)\Delta W
					   - \frac{2}{3} \frac{\partial U_1}{\partial q} \bigg(\frac{q_k+3 q_{k+1}}{4}\bigg)\Delta W,
	\end{align}
	
	\noindent
	where
	
	\begin{align}
		P_1 = p_k &- \frac{1}{2} \frac{\partial U_0}{\partial q} \bigg(\frac{3 q_k+q_{k+1}}{4}\bigg)\Delta t 
					   + \frac{1}{6} \frac{\partial U_0}{\partial q} \bigg(\frac{q_k+q_{k+1}}{2}\bigg)\Delta t
					   - \frac{1}{6} \frac{\partial U_0}{\partial q} \bigg(\frac{q_k+3 q_{k+1}}{4}\bigg)\Delta t \nonumber \\
	           &- \frac{1}{2} \frac{\partial U_1}{\partial q} \bigg(\frac{3 q_k+q_{k+1}}{4}\bigg)\Delta W 
					   + \frac{1}{6} \frac{\partial U_1}{\partial q} \bigg(\frac{q_k+q_{k+1}}{2}\bigg)\Delta W
					   - \frac{1}{6} \frac{\partial U_1}{\partial q} \bigg(\frac{q_k+3 q_{k+1}}{4}\bigg)\Delta W,
	\end{align}
	
	\noindent
	and $H(q,p)=T_0(p)+U_0(q)$ and $h(q,p)=T_1(p)+U_1(q)$. In this case only the first equation needs to be solved for $q_{k+1}$, and then the second equation is an explicit update.
	
\end{enumerate}

\subsubsection{Hamiltonian function $h(q)$ independent of momentum}
\label{sec:Stochastic Hamiltonian independent of momentum}

In case the Hamiltonian function $h=h(q)$ is independent of the momentum variable $p$, the term $\partial h / \partial p$ does not enter equation \eqref{eq:Stochastic Galerkin variational integrator 4}, and therefore we can allow a choice of quadrature rules such that $\alpha_i=0$ or $\beta_i=0$ for some $i$. If $\alpha_i=0$, however, the system \eqref{eq:Stochastic Galerkin variational integrator} becomes underdetermined, but at the same time the corresponding $P_i$ does not enter any of the remaining equations, therefore we can simply ignore it. To simplify the notation, let $i_1<\ldots<i_{\bar r}$ be the set of indices such that $\alpha_{i_m}\not = 0$, and denote $\bar \alpha_m\equiv \alpha_{i_m}$, $\bar c_m\equiv c_{i_m}$ for $m=1,\ldots,\bar r$. Similarly, let $j_1<\ldots<j_{\tilde r}$ be the set of indices such that $\beta_{j_m}\not = 0$, and denote $\tilde \beta_m\equiv \beta_{i_m}$, $\tilde c_m\equiv c_{j_m}$ for $m=1,\ldots,\tilde r$. In \eqref{eq:Stochastic Galerkin variational integrator} leave out the terms and equations corresponding to $\alpha_i=0$ or $\beta_i=0$, and replace $\alpha_i$, $\beta_i$, $c_i$ and $r$ by $\bar \alpha_i$, $\tilde \beta_i$, $\bar c_i$, $\tilde c_i$, $\bar r$ and $\tilde r$, accordingly. In other words, this is equivalent to using the quadrature rules $(\bar \alpha_i,\bar c_i)_{i=1}^{\bar r}$ and $(\tilde \beta_i,\tilde c_i)_{i=1}^{\tilde r}$ in \eqref{eq:Discrete Hamiltonian}. We then simultaneously solve \eqref{eq:Stochastic Galerkin variational integrator 1}, \eqref{eq:Stochastic Galerkin variational integrator 2} and \eqref{eq:Stochastic Galerkin variational integrator 4} ($(s+\bar r)N$ equations) for $q^1, \ldots, q^s$ and $P_1,\ldots, P_{\bar r}$ ($(s+\bar r)N$ unknowns). A few examples of such integrators are listed below.

\begin{enumerate}
	\item $P1N1Q1Rec$ (\emph{Stochastic symplectic Euler method}) \\The rectangle rule ($\bar r=1$, $\bar c_1=1$, $\bar \alpha_1=1$) does not yield a convergent numerical scheme in the general case, but when $h=h(q)$, the It\^o and Stratonovich interpretations of \eqref{eq: Stochastic Hamiltonian system} are equivalent, and the rectangle rule can be used to construct efficient integrators. In fact, applying the rectangle rule to both the Lebesgue and Stratonovich integrals and using polynomials of degree $s=1$ yield a method which can be written as 
	
	\begin{align}
	\label{eq:Stochastic symplectic Euler method}
	q_{k+1}&= q_k + \frac{\partial H}{\partial p} \big(q_{k+1}, p_k \big)\Delta t, \nonumber \\
	p_{k+1}&= p_k - \frac{\partial H}{\partial q} \big(q_{k+1}, p_k \big)\Delta t 
	              - \frac{\partial h}{\partial q} \big(q_{k+1}\big)\Delta W.
	\end{align}
	
	\noindent
	This method is a straightforward generalization of the symplectic Euler scheme (see \cite{HLWGeometric}, \cite{MarsdenWestVarInt}) and is particularly computationally efficient, as only the first equation needs to be solved for $q_{k+1}$, and then the second equation is an explicit update. Moreover, in case the Hamiltonian $H$ is separable, the method becomes explicit.

	\item $P1N1Q1RecN2Q2Lob$ \\The accuracy of the stochastic symplectic Euler scheme above can be improved by applying the trapezoidal rule to the Stratonovich integral instead of the rectangle rule. The resulting integrator takes the form
	
	\begin{align}
	\label{eq:Stochastic rectangle/trapezoidal method}
	q_{k+1} &= q_k + \frac{\partial H}{\partial p} \big(q_{k+1}, P_1 \big)\Delta t, \nonumber \\
	p_{k+1} &= p_k - \frac{\partial H}{\partial q} \big(q_{k+1},P_1 \big)\Delta t 
	               - \frac{1}{2} \frac{\partial h}{\partial q} \big(q_k\big)\Delta W
	               - \frac{1}{2} \frac{\partial h}{\partial q} \big(q_{k+1} \big)\Delta W,
	\end{align}
	
	\noindent
	where
	
	\begin{equation}
	P_1= p_k - \frac{1}{2} \frac{\partial h}{\partial q} \big(q_k \big) \Delta W.
	\end{equation}
	
	\noindent
	While having a similar computational cost, this method yields a more accurate solution than \eqref{eq:Stochastic symplectic Euler method} (see Section~\ref{sec:Numerical experiments} for numerical tests). Moreover, in case the Hamiltonian $H$ is separable, the method becomes explicit.
	
	\item $P1N1Q1RecN1Q2Gau$ \\Similarly, if we apply the midpoint rule instead of the trapezoidal rule, we obtain the following modification of the stochastic symplectic Euler method:
	
	\begin{align}
	\label{eq:Stochastic rectangle/midpoint method}
	q_{k+1} &= q_k + \frac{\partial H}{\partial p} \big(q_{k+1}, P_1 \big)\Delta t, \nonumber \\
	p_{k+1} &= p_k - \frac{\partial H}{\partial q} \big(q_{k+1},P_1 \big)\Delta t 
	               - \frac{\partial h}{\partial q} \bigg( \frac{q_k+q_{k+1}}{2}\bigg)\Delta W,
	\end{align}
	
	\noindent
	where
	
	\begin{equation}
	P_1= p_k - \frac{1}{2} \frac{\partial h}{\partial q} \bigg( \frac{q_k+q_{k+1}}{2}\bigg) \Delta W.
	\end{equation}
	
	\noindent
	This method demonstrates a similar performance as \eqref{eq:Stochastic rectangle/trapezoidal method} (see Section~\ref{sec:Numerical experiments} for numerical tests). It becomes explicit if the Hamiltonian $H$ is separable and the noise is additive, i.e., $\partial h / \partial q = \text{const}$.

	\item $P2N2Q2LobN1Q1Rec$ \\ A modification of the stochastic St{\"o}rmer-Verlet method \eqref{eq:Stochastic Stormer-Verlet method} is obtained if we use the rectangle rule to approximate the Stratonovich integral:
	
	\begin{align}
	\label{eq:Stochastic Stormer-Verlet/rectangle method}
	P_1 &= p_k - \frac{1}{2} \frac{\partial H}{\partial q} \big(q_k, P_1 \big)\Delta t, \nonumber \\
	q_{k+1}&= q_k + \frac{1}{2} \frac{\partial H}{\partial p} \big(q_k, P_1 \big)\Delta t
	              + \frac{1}{2} \frac{\partial H}{\partial p} \big(q_{k+1}, P_1 \big)\Delta t, \nonumber \\
	p_{k+1}&= P_1 - \frac{1}{2} \frac{\partial H}{\partial q} \big(q_{k+1}, P_1 \big)\Delta t 
	              - \frac{\partial h}{\partial q} \big(q_{k+1}\big)\Delta W.
	\end{align}
	
	\noindent
	This integrator has a similar computational cost as the stochastic St{\"o}rmer-Verlet method (see Section~\ref{sec:Numerical experiments}), but it yields a slightly less accurate solution (see Section~\ref{sec:Numerical experiments}). Moreover, in case the Hamiltonian $H$ is separable, the method becomes explicit.

	\item $P1N1Q2GauN2Q2Lob$ \\This integrator is a modification of the stochastic midpoint method \eqref{eq:Stochastic midpoint method}. We apply the midpoint rule ($\bar r=1$, $\bar c_1=1/2$, $\bar \alpha_1=1$) to the Lebesgue integral in \eqref{eq:Restricted stochastic action functional}, and the trapezoidal rule ($\tilde r=2$, $\tilde c_1=0$, $\tilde c_2=1$, $\tilde \beta_1=1/2$, $\tilde \beta_2=1/2$) to the Stratonovich integral. The resulting numerical scheme can be written as 
	
	\begin{align}
	\label{eq:Stochastic midpoint/trapezoidal method}
	q_{k+1} &= q_k + \frac{\partial H}{\partial p} \bigg(\frac{q_k+q_{k+1}}{2},P_1 \bigg)\Delta t, \nonumber \\
	p_{k+1} &= p_k - \frac{\partial H}{\partial q} \bigg(\frac{q_k+q_{k+1}}{2},P_1 \bigg)\Delta t 
	               - \frac{1}{2} \frac{\partial h}{\partial q} \big(q_k\big)\Delta W
	               - \frac{1}{2} \frac{\partial h}{\partial q} \big(q_{k+1} \big)\Delta W,
	\end{align}
	
	\noindent
	where
	
	\begin{equation}
	P_1=\frac{p_k+p_{k+1}}{2}+\frac{1}{4} \Delta W \bigg[\frac{\partial h}{\partial q} \big(q_{k+1} \big)-\frac{\partial h}{\partial q} \big(q_k \big) \bigg].
	\end{equation}
	
	\noindent
 This method is fully implicit, but similar to \eqref{eq:Stochastic midpoint method}, simplifies when the Hamiltonian $H$ is separable.
	
	\item $P1N2Q2LobN1Q2Gau$ \\If instead we apply the trapezoidal rule to the Lebesgue integral and the midpoint rule to the Stratonovich integral in \eqref{eq:Restricted stochastic action functional}, we obtain a modification of the stochastic trapezoidal rule \eqref{eq:Stochastic trapezoidal method}:
	
	\begin{align}
	\label{eq:Stochastic trapezoidal/midpoint method}
	p_k &= \frac{1}{2}(P_1+P_2) + \frac{1}{2} \frac{\partial H}{\partial q} \big(q_k, P_1 \big)\Delta t 
	                           + \frac{1}{2} \frac{\partial h}{\partial q} \bigg(\frac{q_k+q_{k+1}}{2} \bigg)\Delta W, \nonumber \\
	p_{k+1} &= \frac{1}{2}(P_1+P_2) - \frac{1}{2} \frac{\partial H}{\partial q} \big(q_{k+1}, P_2 \big)\Delta t 
	                                - \frac{1}{2} \frac{\partial h}{\partial q} \bigg(\frac{q_k+q_{k+1}}{2} \bigg)\Delta W, \nonumber \\
	q_{k+1} &= q_k + \frac{\partial H}{\partial p} \big(q_k,P_1 \big)\Delta t, \nonumber \\
	q_{k+1} &= q_k + \frac{\partial H}{\partial p} \big(q_{k+1},P_2 \big)\Delta t.
	\end{align}
	
	\noindent
	This method becomes explicit when the Hamiltonian $H$ is separable and the noise is additive, i.e., $\partial h / \partial q = \text{const}$.
	
\end{enumerate}

\subsection{Convergence}
\label{sec:Convergence}

Various criteria for convergence of stochastic schemes have been suggested in the literature (see \cite{KloedenPlatenSDE}, \cite{MilsteinBook}). Some criteria concentrate on pathwise approximations of the exact solutions (\emph{mean-square convergence, strong convergence}), while others focus on approximations of some functionals instead (\emph{weak convergence}). We are here primarily interested in mean-square convergence. Let $\bar z(t) = (\bar q(t),\bar p(t))$ be the exact solution to \eqref{eq: Stochastic Hamiltonian system} with the initial conditions $q_0$ and $p_0$, and let $z_k = (q_k,p_k)$ denote the numerical solution at time $t_k$ obtained by applying \eqref{eq:Stochastic Galerkin variational integrator} iteratively $k$ times with the constant time step $\Delta t$. The numerical solution is said to converge in the mean-square sense with global order $r$ if there exist $\delta>0$ and a constant $C>0$ such that for all $\Delta t \in (0,\delta)$ we have

\begin{equation}
\label{eq:Definition of mean-square convergence}
\sqrt{E(\| z_K-\bar z(T)\|^2 )} \leq C\Delta t^r,
\end{equation}

\noindent
where $T = K\Delta t$, as defined before, and $E$ denotes the expected value. In principle, in order to determine the mean-square order of convergence of the Galerkin variational integrator \eqref{eq:Stochastic Galerkin variational integrator} we need to calculate the power series expansions of $q_{k+1}$ and $p_{k+1}$ in terms of the powers of $\Delta t$ and $\Delta W$, and compare them to the Stratonovich-Taylor expansions for the exact solution $\bar q(t_k+\Delta t)$ and $\bar p(t_k+\Delta t)$ (see \cite{Burrage2000}, \cite{KloedenPlatenSDE}, \cite{MilsteinBook}). It is quite a tedious task to do in the general case, therefore we only discuss the examples presented in Section~\ref{sec:Examples}. For instance, in case of the stochastic trapezoidal method \eqref{eq:Stochastic trapezoidal method} we plug in series expansions for $P_1$, $P_2$, $q_{k+1}$ and $p_{k+1}$, and determine their coefficients by expanding the derivatives of the Hamiltonians into Taylor series around $(q_k,p_k)$ and comparing the terms corresponding to the like powers of $\Delta t$ and $\Delta W$. We find that

\begin{align}
\label{eq:Series expansion for the stochastic trapezoidal method}
q_{k+1} &= q_k + \frac{\partial H}{\partial p}\Delta t + \frac{\partial h}{\partial p}\Delta W + \frac{1}{2} \bigg(\frac{\partial^2 h}{\partial p \partial q} \frac{\partial h}{\partial p} - \frac{\partial^2 h}{\partial p^2} \frac{\partial h}{\partial q} \bigg) \Delta W^2+\ldots, \nonumber \\
p_{k+1}&= p_k - \frac{\partial H}{\partial q}\Delta t - \frac{\partial h}{\partial q}\Delta W - \frac{1}{2} \bigg(\frac{\partial^2 h}{\partial q^2} \frac{\partial h}{\partial p}  - \frac{\partial^2 h}{\partial q \partial p} \frac{\partial h}{\partial q}\bigg) \Delta W^2+\ldots,
\end{align} 

\noindent
where the derivatives of the Hamiltonians are evaluated at $(q_k,p_k)$. Let $\bar q(t;q_k,p_k)$ and $\bar p(t;q_k,p_k)$ denote the exact solution of \eqref{eq: Stochastic Hamiltonian system} such that $\bar q(t_k;q_k,p_k)=q_k$ and $\bar p(t_k;q_k,p_k)=p_k$. Using \eqref{eq: Stochastic Hamiltonian system} we calculate the Stratonovich-Taylor expansions for $\bar q(t_{k+1};q_k,p_k)$ and $\bar p(t_{k+1};q_k,p_k)$, and comparing them to \eqref{eq:Series expansion for the stochastic trapezoidal method} we find that

\begin{align}
\label{eq:Estimation of the remainders}
E\big(q_{k+1}-\bar q(t_{k+1};q_k,p_k)\big) &= O(\Delta t^2), \qquad \sqrt{E\big( \|q_{k+1}-\bar q(t_{k+1};q_k,p_k)\|^2 \big)} = O(\Delta t^{\frac{3}{2}}), \nonumber \\
E\big(p_{k+1}-\bar p(t_{k+1};q_k,p_k)\big) &= O(\Delta t^2), \qquad \sqrt{E\big( \|p_{k+1}-\bar p(t_{k+1};q_k,p_k)\|^2 \big)} = O(\Delta t^{\frac{3}{2}}).
\end{align}

\noindent
Using Theorem~1.1 from \cite{MilsteinBook}, we conclude that the stochastic trapezoidal method \eqref{eq:Stochastic trapezoidal method} has mean-square order of convergence $r=1$. In a similar fashion we prove that all methods presented in Section~\ref{sec:Examples} are convergent with mean-square order 1. We further verify these results numerically in Section~\ref{sec:Numerical convergence analysis}.

\paragraph{Remark.} For simplicity and clarity of the exposition, in this work we are mainly concerned with a one-dimensional noise in \eqref{eq: Stochastic Hamiltonian system}. However, all of the constructions and results presented in Section~\ref{sec:Variational principle for stochastic Hamiltonian systems} and Section~\ref{sec:Stochastic Galerkin Hamiltonian Variational Integrators} generalize in a straightforward manner, when a multidimensional noise $W^1, W^2, \ldots, W^M$, together with the corresponding Hamiltonian functions $h_1, h_2, \ldots, h_M$, is considered in \eqref{eq: Stochastic Hamiltonian system}, except that the integrators derived in Section~\ref{sec:Examples} in general do not attain mean-square order 1.0 of convergence, unless the noise is commutative. Indeed, if we repeat the procedure described above for the stochastic trapezoidal method, we will obtain the following power series expansions in terms of the powers of $\Delta t$ and $\Delta W^i$:

\begin{align}
\label{eq:Series expansion for the stochastic trapezoidal method with multidimensional noise}
q_{k+1} &= q_k + \frac{\partial H}{\partial p}\Delta t + \sum_{i=1}^M \frac{\partial h_i}{\partial p}\Delta W^i + \frac{1}{2} \sum_{i=1}^M \Gamma_{ii} (\Delta W^i)^2 + \frac{1}{2}\sum_{i=1}^M \sum_{\substack{j=1 \\ j \not = i}}^M \Gamma_{ij} \Delta W^i \Delta W^j + \ldots, \nonumber \\
p_{k+1}&= p_k - \frac{\partial H}{\partial q}\Delta t - \sum_{i=1}^M \frac{\partial h_i}{\partial q}\Delta W^i + \frac{1}{2} \sum_{i=1}^M \Lambda_{ii} (\Delta W^i)^2 + \frac{1}{2}\sum_{i=1}^M \sum_{\substack{j=1 \\ j \not = i}}^M \Lambda_{ij} \Delta W^i \Delta W^j +\ldots,
\end{align}

\noindent
where the vectors $\Gamma_{ij}$ and $\Lambda_{ij}$ for each $i,j=1,\ldots,M$ are defined as

\begin{align}
\label{eq: Gamma and Lambda terms definition}
\Gamma_{ij} =  \frac{\partial^2 h_j}{\partial p \partial q} \frac{\partial h_i}{\partial p} - \frac{\partial^2 h_j}{\partial p^2} \frac{\partial h_i}{\partial q}, \qquad \quad \Lambda_{ij} = -\frac{\partial^2 h_j}{\partial q^2} \frac{\partial h_i}{\partial p} + \frac{\partial^2 h_j}{\partial q \partial p} \frac{\partial h_i}{\partial q},
\end{align}

\noindent
and the derivatives of the Hamiltonians are evaluated at $(q_k, p_k)$. On the other hand, the Stratonovich-Taylor expansions for $\bar q(t_{k+1};q_k,p_k)$ and $\bar p(t_{k+1};q_k,p_k)$ read, respectively, 

\begin{align}
\label{eq:Stratonovich-Taylor expansion with multidimensional noise}
\bar q(t_{k+1};q_k,p_k) &= q_k + \frac{\partial H}{\partial p}\Delta t + \sum_{i=1}^M \frac{\partial h_i}{\partial p}\Delta W^i + \frac{1}{2} \sum_{i=1}^M \Gamma_{ii} (\Delta W^i)^2 + \sum_{i=1}^M \sum_{\substack{j=1 \\ j \not = i}}^M \Gamma_{ij} J_{ij} + \ldots, \nonumber \\
\bar p(t_{k+1};q_k,p_k)&= p_k - \frac{\partial H}{\partial q}\Delta t - \sum_{i=1}^M \frac{\partial h_i}{\partial q}\Delta W^i + \frac{1}{2} \sum_{i=1}^M \Lambda_{ii} (\Delta W^i)^2 + \sum_{i=1}^M \sum_{\substack{j=1 \\ j \not = i}}^M \Lambda_{ij} J_{ij} +\ldots,
\end{align}

\noindent
where $J_{ij} = \int_{t_k}^{t_{k+1}}\int_{t_k}^{t}dW^i(\tau)\circ dW^j(t)$ denotes a double Stratonovich integral. Comparing \eqref{eq: Gamma and Lambda terms definition} and \eqref{eq:Stratonovich-Taylor expansion with multidimensional noise}, we find that in the general case not all first order terms agree, and therefore we only have the local error estimates 

\begin{align}
\label{eq:Estimation of the remainders for multidimensional noise}
E\big(q_{k+1}-\bar q(t_{k+1};q_k,p_k)\big) &= O(\Delta t^{\frac{3}{2}}), \qquad \sqrt{E\big( \|q_{k+1}-\bar q(t_{k+1};q_k,p_k)\|^2 \big)} = O(\Delta t), \nonumber \\
E\big(p_{k+1}-\bar p(t_{k+1};q_k,p_k)\big) &= O(\Delta t^{\frac{3}{2}}), \qquad \sqrt{E\big( \|p_{k+1}-\bar p(t_{k+1};q_k,p_k)\|^2 \big)} = O(\Delta t).
\end{align}

\noindent
Theorem~1.1 from \cite{MilsteinBook} then implies that the stochastic trapezoidal method has mean-square order $1/2$. However, if the noise is commutative, that is, if the following conditions are satisfied

\begin{equation}
\label{eq: Commutation conditions}
\Gamma_{ij}=\Gamma_{ji}, \qquad \Lambda_{ij}=\Lambda_{ji}, \qquad \text{for all $i,j=1,\ldots,M$},
\end{equation}  

\noindent
then using the property $J_{ij}+J_{ji}=\Delta W^i \Delta W^j$ (see \cite{KloedenPlatenSDE}, \cite{MilsteinBook}), one can easily show

\begin{equation}
\label{eq: Jij terms from the Stratonovich-Taylor expansion}
\sum_{i=1}^M \sum_{\substack{j=1 \\ j \not = i}}^M \Gamma_{ij} J_{ij} = \frac{1}{2}\sum_{i=1}^M \sum_{\substack{j=1 \\ j \not = i}}^M \Gamma_{ij} \Delta W^i \Delta W^j, \qquad \quad \sum_{i=1}^M \sum_{\substack{j=1 \\ j \not = i}}^M \Lambda_{ij} J_{ij} = \frac{1}{2}\sum_{i=1}^M \sum_{\substack{j=1 \\ j \not = i}}^M \Lambda_{ij} \Delta W^i \Delta W^j.
\end{equation} 

\noindent
In that case all first-order terms in the expansions \eqref{eq:Series expansion for the stochastic trapezoidal method with multidimensional noise} and \eqref{eq:Stratonovich-Taylor expansion with multidimensional noise} agree, and we again have the local error estimates \eqref{eq:Estimation of the remainders}, meaning that the scheme has mean-square order $1.0$. Similar analysis holds for all the methods presented in Section~\ref{sec:Examples}. It should be noted that the commutation conditions \eqref{eq: Commutation conditions} hold for two important special cases: 

\begin{itemize}
\item Hamiltonian functions $h_i$ linear in $q$ and $p$ for all $i=1,\ldots,M$, i.e. additive noise
\item Hamiltonian functions $h_i$ simultaneously independent of one of the variables $q$ or $p$ for all $i=1,\ldots,M$
\end{itemize}

\noindent
The latter in particular means that the methods presented in Section~\ref{sec:Stochastic Hamiltonian independent of momentum} retain their mean-square order of convergence for multidimensional noises.

\subsection{Methods of order $3/2$}
\label{sec:Methods of order 3/2}

In order to construct stochastic Galerkin variational integrators of higher order one needs to include higher order terms in the discretization of the Stratonovich integral in \eqref{eq:Approximate stochastic action functional}. For example, a method of mean-square order 3/2 must include terms involving $\Delta Z = \int_{t_k}^{t_{k+1}}\int_{t_k}^{t}dW(\xi)\,dt$ (see \cite{Burrage2000}, \cite{MilsteinRepin2001}, \cite{MilsteinRepin}). Inspired by the theory presented in \cite{Burrage2000}, we can add extra terms to the discrete Hamiltonian \eqref{eq:Discrete Hamiltonian} and write it as

\begin{align}
\label{eq:Discrete Hamiltonian with dZ}
H^+_d(q_k,p_{k+1}) &= \ext_{ \substack{q^1,\ldots,q^s \in Q \\ P_1, \ldots, P_r \in Q^* \\ q^0 = q_k} } \bigg\{ p_{k+1}q^s - \Delta t \sum_{i=1}^r \alpha_i \Big[ P_i \dot q_d(t_k+c_i\Delta t) - H\big(q_d(t_k+c_i\Delta t),P_i\big) \Big] \nonumber \\
&\phantom{= p_{k+1}q^sa} + \Delta W \sum_{i=1}^r \beta_i  h\big(q_d(t_k+c_i\Delta t),P_i\big) + \frac{\Delta Z}{\Delta t} \sum_{i=1}^r \gamma_i  h\big(q_d(t_k+c_i\Delta t),P_i\big) \bigg\}.
\end{align}

\noindent
The random variables $\Delta W$ and $\Delta Z$ have a Gaussian joint distribution (see \cite{KloedenPlatenSDE}, \cite{MilsteinRepin}), and at each time step they can be simulated by two independent $\mathcal{N}(0,1)$-distributed random variables $\chi$ and $\eta$ as

\begin{equation}
\label{eq:dW and dZ}
\Delta W = \chi \sqrt{\Delta t}, \qquad \qquad \Delta Z = \frac{1}{2}\Delta t^{\frac{3}{2}} \Big(\chi + \frac{1}{\sqrt{3}} \eta \Big).
\end{equation}

\noindent
In order to achieve mean-square convergence of order 3/2 one needs to determine appropriate values for the parameters $s$, $r$, $\alpha_i$, $\beta_i$, $\gamma_i$, and $c_i$. However, we will not attempt to achieve this in the present work. Instead, we will show that some known stochastic symplectic integrators can be derived as stochastic Galerkin variational integrators.

Suppose the Hamiltonian is separable, i.e., $H(q,p)=T(p)+U(q)$, and the Hamiltonian function $h=h(q)$ does not depend on momentum. Consider the discrete Hamiltonian

\begin{align}
\label{eq:Discrete Hamiltonian with dZ for separable Hamiltonians}
H^+_d(q_k,p_{k+1}) &= \ext_{ \substack{q^1,\ldots,q^s \in Q \\ P_1, \ldots, P_r \in Q^* \\ q^0 = q_k} } \bigg\{ p_{k+1}q^s - \Delta t \sum_{i=1}^r \Big[ \bar \alpha_i P_i \dot q_d(t_k+c_i\Delta t) - \bar \alpha_i U\big(q_d(t_k+c_i\Delta t)\big) - \alpha_i T\big(P_i\big) \Big] \nonumber \\
&\phantom{= p_{k+1}q^sa} + \Delta W \sum_{i=1}^r \bar \beta_i  h\big(q_d(t_k+c_i\Delta t)\big) + \frac{\Delta Z}{\Delta t} \sum_{i=1}^r \bar \gamma_i  h\big(q_d(t_k+c_i\Delta t)\big) \bigg\},
\end{align}

\noindent
where different weights $\bar \alpha_i$ and $\alpha_i$ were applied to the potential $U(q)$ and kinetic $T(p)$ terms, respectively. Similar to \eqref{eq:Stochastic Galerkin variational integrator}, the corresponding stochastic variational integrator takes the form

\begin{align}
-p_k&=\sum_{i=1}^r \bar \alpha_i \Big[ P_i \dot l_{0,s}(c_i) - \Delta t \frac{\partial U}{\partial q}\big(t_k+c_i\Delta t\big) l_{0,s}(c_i) \Big] - \sum_{i=1}^r \Big(\bar \beta_i \Delta W + \bar \gamma_i\frac{\Delta Z}{\Delta t}\Big) \frac{\partial h}{\partial q}\big(t_k+c_i\Delta t\big) l_{0,s}(c_i), \nonumber \\
0&=\sum_{i=1}^r \bar \alpha_i \Big[ P_i \dot l_{\mu,s}(c_i) - \Delta t \frac{\partial U}{\partial q}\big(t_k+c_i\Delta t\big) l_{\mu,s}(c_i) \Big] - \sum_{i=1}^r \Big(\bar \beta_i \Delta W + \bar \gamma_i\frac{\Delta Z}{\Delta t}\Big) \frac{\partial h}{\partial q}\big(t_k+c_i\Delta t\big) l_{\mu,s}(c_i), \nonumber \\
p_{k+1}&=\sum_{i=1}^r \bar \alpha_i \Big[ P_i \dot l_{s,s}(c_i) - \Delta t \frac{\partial U}{\partial q}\big(t_k+c_i\Delta t\big) l_{s,s}(c_i) \Big] - \sum_{i=1}^r \Big(\bar \beta_i \Delta W + \bar \gamma_i\frac{\Delta Z}{\Delta t}\Big) \frac{\partial h}{\partial q}\big(t_k+c_i\Delta t\big) l_{s,s}(c_i), \nonumber \\
\label{eq:Stochastic Galerkin variational integrator with dZ}
\bar \alpha_i \dot q_d&(t_k+c_i\Delta t) = \alpha_i \frac{\partial T}{\partial p}\big(P_i\big),\\
q_{k+1}&=q^s, \nonumber
\end{align}

\noindent
where $\mu=1,\ldots,s-1$ in the second equation, and $i=1,\ldots, r$ in the fourth equation. In the special case when $r=s$ and

\begin{equation}
\label{eq:Weights bar alpha_i in terms of Lagrange polynomials}
\bar \alpha_i = \int_0^1 \bar l_{i,s-1}(\tau)\,d\tau, \quad \qquad i=1,\ldots,s,
\end{equation}

\noindent
we can show, similar to Theorem~\ref{thm:Stochastic Galerkin variational integrator as an SPRK method}, that the stochastic Galerkin variational integrator \eqref{eq:Stochastic Galerkin variational integrator with dZ} is equivalent to the stochastic partitioned Runge-Kutta method

\begin{align}
\label{eq:SPRK for separable stochastic Hamiltonian systems with dZ}
Q_i &= q_k + \Delta t \sum_{j=1}^s a_{ij} \frac{\partial T}{\partial p}(P_j), \phantom{   - \sum_{j=1}^s \Big( \bar b_{ij} \Delta W + \bar \lambda_{ij}\frac{\Delta Z}{\Delta t} \Big) \frac{\partial h}{\partial q}(Q_j),,  }  \quad \qquad i=1,\ldots,s,  \nonumber \\
P_i &= p_k - \Delta t \sum_{j=1}^s \bar a_{ij} \frac{\partial U}{\partial q}(Q_j) - \sum_{j=1}^s \Big( \bar b_{ij} \Delta W + \bar \lambda_{ij}\frac{\Delta Z}{\Delta t} \Big) \frac{\partial h}{\partial q}(Q_j), \quad \qquad i=1,\ldots,s, \nonumber \\
q_{k+1} &= q_k + \Delta t \sum_{i=1}^s \alpha_i \frac{\partial T}{\partial p}(P_i),\\
p_{k+1} &= p_k - \Delta t \sum_{i=1}^s \bar \alpha_i \frac{\partial U}{\partial q}(Q_i) - \sum_{i=1}^s \Big( \bar \beta_i \Delta W + \bar \gamma_i \frac{\Delta Z}{\Delta t} \Big) \frac{\partial h}{\partial q}(Q_i), \nonumber 
\end{align}

\noindent
with the coefficients

\begin{align}
\label{eq:Coefficients of the SPRK with dZ}
a_{ij}&=\frac{\alpha_j}{\bar \alpha_j}\int_0^{c_i} \bar l_{j,s-1}(\tau)\,d\tau,  & \bar a_{ij} &= \frac{\bar \alpha_j(\alpha_i-a_{ji})}{\alpha_i}, \nonumber \\
\bar b_{ij} &= \frac{\bar \beta_j(\alpha_i-a_{ji})}{\alpha_i}, & \bar \lambda_{ij} &= \frac{\bar \gamma_j(\alpha_i-a_{ji})}{\alpha_i}, \qquad \qquad i,j=1,\ldots,s,
\end{align}

\noindent
where we assume $\alpha_i \not = 0$ and $\bar \alpha_i \not = 0$ for all $i$. Partitioned Runge-Kutta methods of type \eqref{eq:SPRK for separable stochastic Hamiltonian systems with dZ} were considered in \cite{MilsteinRepin}. In particular, it was shown that for $s=2$ the choice of the coefficients 

\begin{align}
\label{eq:Coefficients of Milstein's SPRK}
\alpha_1&=2/3,  & \alpha_2&=1/3, &  \bar \alpha_1&=1/4,  &  \bar \alpha_2&=3/4,  &  \bar \beta_1&= -1/2,  &  \bar \beta_2&=3/2,  &  \bar \gamma_1&=3/2  &  \bar \gamma_2&=-3/2, \nonumber \\
a_{11}&= 0,  &  a_{12}&=0,  &  \bar a_{11}&=1/4,  &  \bar a_{12}&=0,  &  \bar b_{11}&=-1/2,  &  \bar b_{12}&=0,  &  \bar \lambda_{11}&=3/2,  &  \bar \lambda_{12}&=0, \nonumber\\
a_{21}&= 2/3,  &  a_{22}&=0,  &  \bar a_{21}&=1/4,  &  \bar a_{22}&=3/4,  &  \bar b_{21}&=-1/2,  &  \bar b_{22}&=3/2,  &  \bar \lambda_{21}&=3/2,  &  \bar \lambda_{22}&=-3/2,
\end{align}

\noindent
gives a method of mean-square order 3/2 (see Theorem~4.3 in \cite{MilsteinRepin}).

%%%%%%%%%%%%%%%%%%%%%%%%%%%%%%%%%%%%%%%%%%%%%%%%%%%%%%%%%%%%%%%%%%%%%%%%%%%%%%%%%%%%
%  NUMERICAL EXPERIMENTS
%%%%%%%%%%%%%%%%%%%%%%%%%%%%%%%%%%%%%%%%%%%%%%%%%%%%%%%%%%%%%%%%%%%%%%%%%%%%%%%%%%%%
\section{Numerical experiments}
\label{sec:Numerical experiments}

In this section we present the results of our numerical experiments. We verify numerically the convergence results from Section~\ref{sec:Convergence} and investigate the conservation properties of our integrators. In particular, we show that our stochastic variational integrators demonstrate superior behavior in long-time simulations compared to some popular general purpose non-symplectic stochastic algorithms. 

\subsection{Numerical convergence analysis}
\label{sec:Numerical convergence analysis}

\subsubsection{Kubo oscillator}
\label{sec:Kubo oscillator convergence}
In order to test the convergence of the numerical algorithms from Section~\ref{sec:General stochastic Hamiltonian} we performed computations for the Kubo oscillator, which is defined by $H(q,p)=p^2/2+q^2/2$ and $h(q,p)=\beta(p^2/2+q^2/2)$, where $\beta$ is the noise intensity (see \cite{MilsteinRepin}). The Kubo oscillator is used in the theory of magnetic resonance and laser physics. The exact solution is given by 

\begin{equation}
\label{eq:Kubo oscillator---exact solution}
\bar q(t)=p_0 \sin(t+\beta W(t)) + q_0 \cos(t+\beta W(t)), \qquad\quad \bar p(t)=p_0 \cos(t+\beta W(t)) - q_0 \sin(t+\beta W(t)),
\end{equation}

\noindent
where $q_0$ and $p_0$ are the initial conditions. Simulations with the initial conditions $q_0=0$, $p_0=1$ and the noise intensity $\beta=0.1$ were carried out until the time $T=3.2$ for a number of time steps $\Delta t = 0.000625, 0.00125, 0.0025, 0.005, 0.01, 0.02$. In each case 2000 sample paths were generated. Let $z_{\Delta t}(t) = (q_{\Delta t}(t), p_{\Delta t}(t) )$ denote the numerical solution. We used the exact solution \eqref{eq:Kubo oscillator---exact solution} as a reference for computing the mean-square error $\sqrt{E(|z_{\Delta t}(T)-\bar z(T)|^2)}$, where $\bar z(t) = (\bar q(t), \bar p(t) )$. The dependence of this error on the time step $\Delta t$ is depicted in Figure~\ref{fig: Convergence plot for the Kubo oscillator}. We verified that our algorithms have mean-square order of convergence $1.0$. The integrators $P1N3Q4Lob$, $P1N3Q4Mil$, $P1N2Q2Lob$ (stochastic trapezoidal method), and $P2N2Q2Lob$ (stochastic St{\"o}rmer-Verlet method) turned out to be the most accurate, with the latter two having least computational cost.

\begin{figure}
	\centering
		\includegraphics[width=\textwidth]{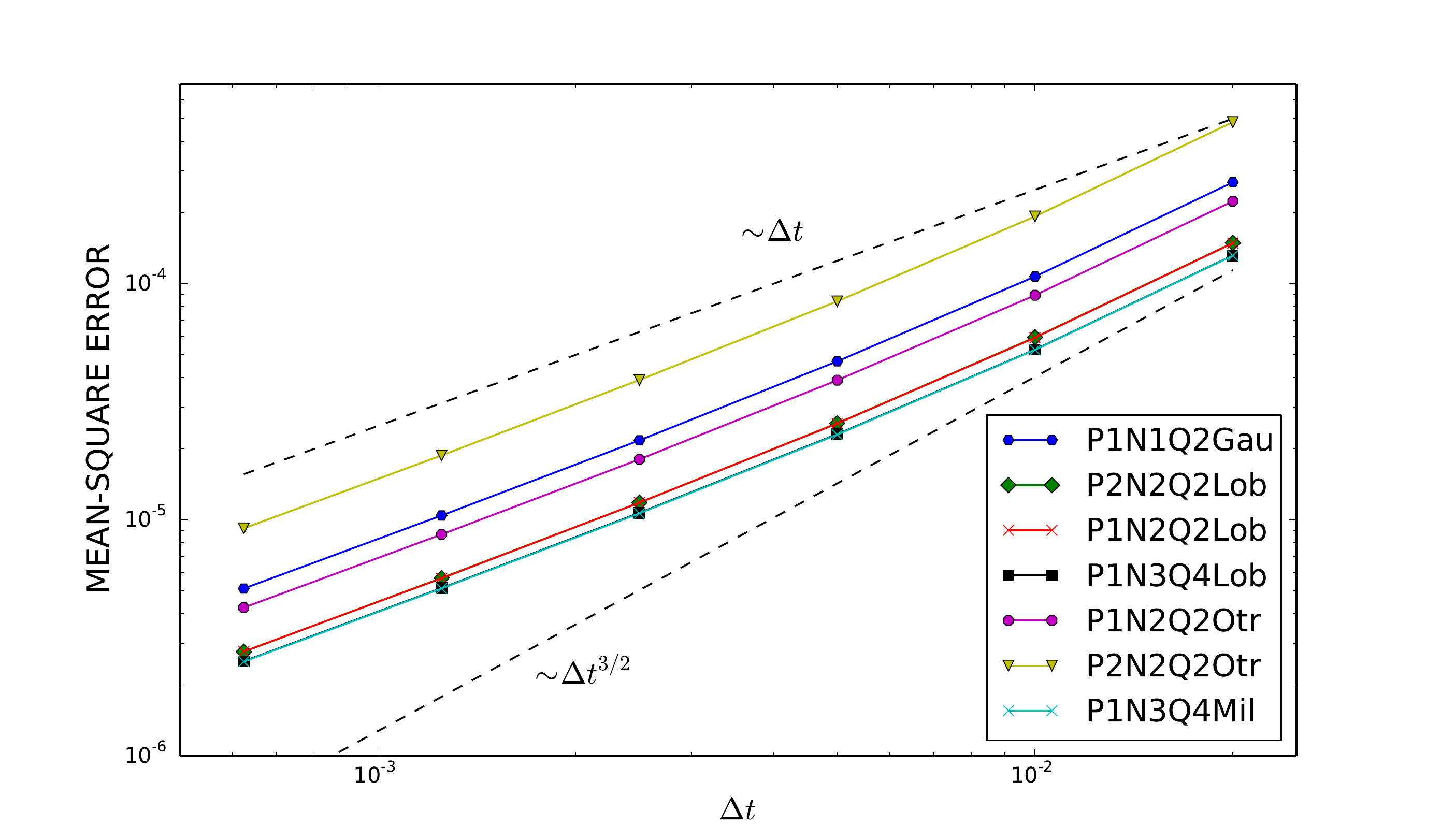}
		\caption{ The mean-square error at the time $T=3.2$ as a function of the time step $\Delta t$ for the simulations of the Kubo oscillator with the initial conditions $q_0=0$, $p_0=1$ and the noise intensity $\beta=0.1$. In each case 2000 sample paths were generated. The tested integrators proved to be convergent of order 1.0 in the mean-square sense. Note that the plots for $P2N2Q2Lob$ and $P1N2Q2Lob$, as well as for $P1N3Q4Lob$ and $P1N3Q4Mil$, overlap very closely.}
		\label{fig: Convergence plot for the Kubo oscillator}
\end{figure}

\subsubsection{Synchrotron oscillations of particles in storage rings}
\label{sec:Synchrotron oscillations of particles in storage rings}
We carried out a similar test for the numerical schemes from Section~\ref{sec:Stochastic Hamiltonian independent of momentum}. We performed computations for the stochastic Hamiltonian system defined by $H(q,p)=p^2/2-\cos q$ and $h(q)=\beta \sin q$, where $\beta$ is the noise intensity. Systems of this type are used for modeling synchrotron oscillations of a particle in a storage ring. Due to fluctuating electromagnetic fields, a particle performs stochastically perturbed oscillations with respect to a reference particle which travels with fixed energy along the design orbit of the accelerator; in this description $p$ corresponds to the energy deviation of the particle from the reference particle, and $q$ measures the longitudinal phase difference of both particles (see \cite{DomeAccelerators}, \cite{SeesselbergParticleStorageRings} for more details). Simulations with the initial conditions $q_0=0$, $p_0=1$ and the noise intensity $\beta=0.1$ were carried out until the time $T=3.2$ for a number of time steps $\Delta t = 0.01, 0.02, 0.04, 0.08, 0.16, 0.32, 0.64$. In each case 2000 sample paths were generated. The mean-square error was calculated with respect to a high-precision reference solution generated using the order 3/2 strong Taylor scheme (see \cite{KloedenPlatenSDE}, Chapter~10.4) with a very fine time step $\Delta t = 2\cdot 10^{-6}$. The dependence of this error on the time step $\Delta t$ is depicted in Figure~\ref{fig: Convergence plot for the synchrotron oscillations}. We verified that our algorithms have mean-square order of convergence $1.0$.

\begin{figure}
	\centering
		\includegraphics[width=\textwidth]{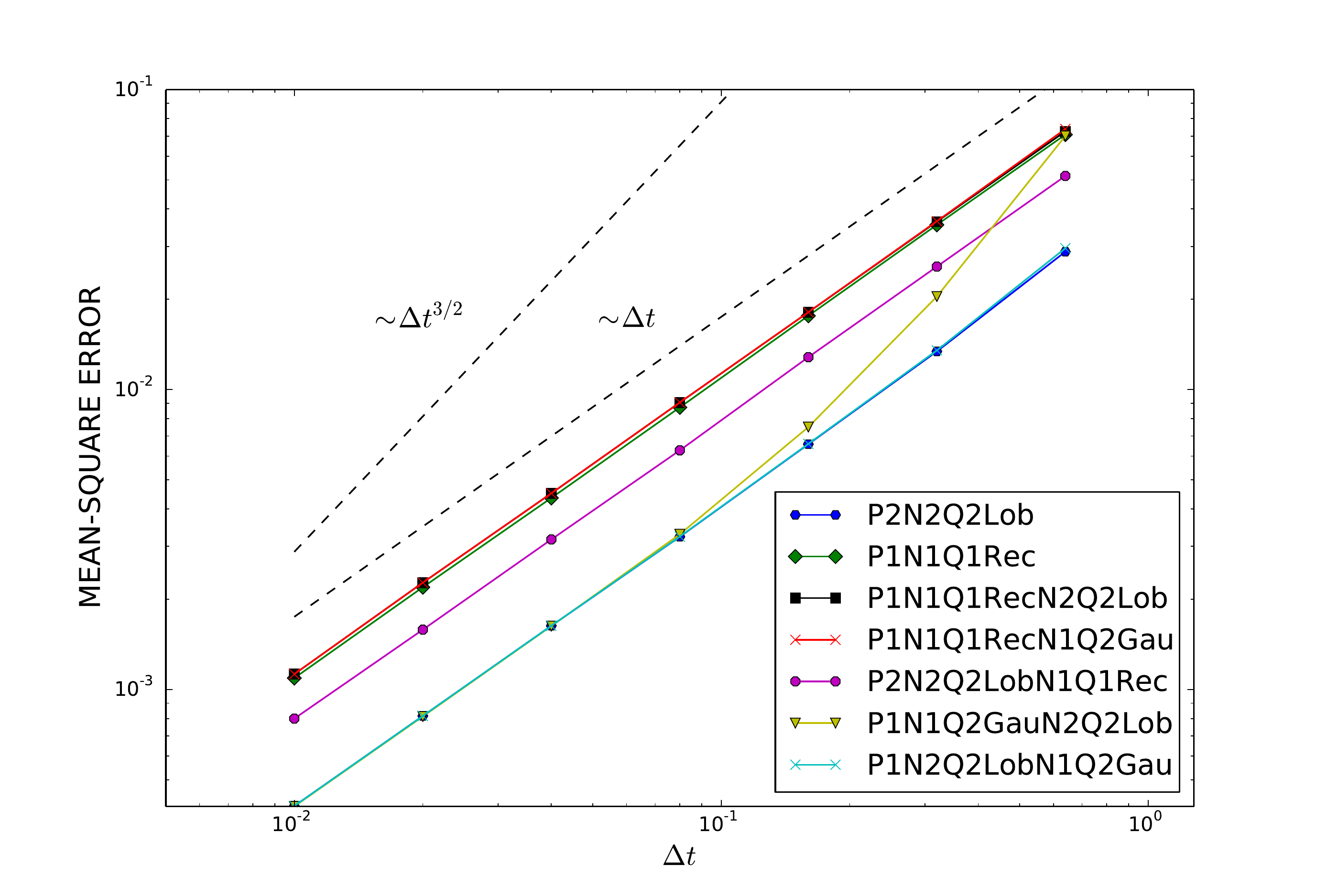}
		\caption{ The mean-square error at the time $T=3.2$ as a function of the time step $\Delta t$ for the simulations of the synchrotron oscillations of a particle in a storage ring with the initial conditions $q_0=0$, $p_0=1$ and the noise intensity $\beta=0.1$. In each case 2000 sample paths were generated. The tested integrators proved to be convergent of order 1.0 in the mean-square sense. Note that the plots for $P1N1Q1Rec$, $P1N1Q1RecN2Q2Lob$, and $P1N1Q1RecN1Q2Gau$, as well as for $P2N2Q2Lob$ and $P1N2Q2LobN1Q2Gau$, overlap very closely. }
		\label{fig: Convergence plot for the synchrotron oscillations}
\end{figure}

\subsection{Long-time energy behavior}
\label{sec:Long-time energy behavior}

\subsubsection{Kubo oscillator}
\label{sec:Kubo oscillator energy}
One can easily check that in the case of the Kubo oscillator the Hamiltonian $H(q,p)$ stays constant for almost all sample paths, i.e., $H(\bar q(t), \bar p(t))=H(q_0,p_0)$ almost surely. We used this example to test the performance of the integrators from Section~\ref{sec:General stochastic Hamiltonian}. Simulations with the initial conditions $q_0=0$, $p_0=1$, the noise intensity $\beta=0.1$, and the relatively large time step $\Delta t = 0.25$ were carried out until the time $T=1000$ (approximately 160 periods of the oscillator in the absence of noise) for a single realization of the Wiener process. For comparison, similar simulations were carried out using non-symplectic explicit methods like Milstein's scheme and the order 3/2 strong Taylor scheme (see \cite{KloedenPlatenSDE}). The numerical value of the Hamiltonian $H(q,p)$ as a function of time for each of the integrators is depicted in Figure~\ref{fig: Hamiltonian for Kubo Oscillator}. We find that the non-symplectic schemes do not preserve the Hamiltonian well, even if small time steps are used. For example, we find that Milstein's scheme does not give a satisfactory solution even with $\Delta t = 0.001$, and though the Taylor scheme with $\Delta t=0.05$ yields a result comparable to the variational integrators, the growing trend of the numerical Hamiltonian is evident. On the other hand, the variational integrators give numerical solutions for which the Hamiltonian oscillates around the true value (one can check via a direct calculation that the stochastic midpoint method \eqref{eq:Stochastic midpoint method} in this case preserves the Hamiltonian exactly; of course this does not necessarily hold in the general case).

\begin{figure}
	\centering
		\includegraphics[width=\textwidth]{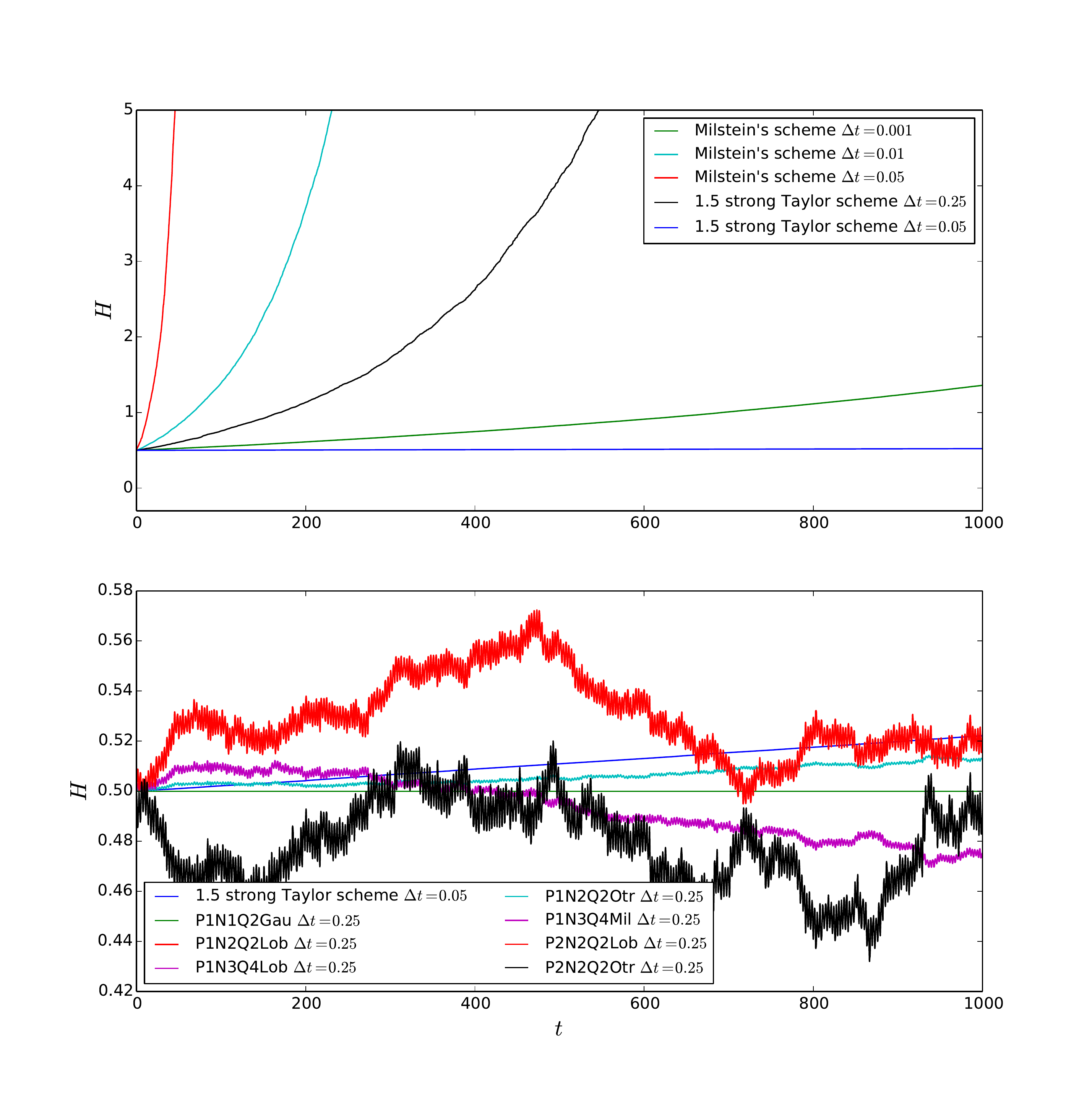}
		\caption{The numerical Hamiltonian for the simulations of the Kubo oscillator with the initial conditions $q_0=0$, $p_0=1$ and the noise intensity $\beta=0.1$. \emph{Top}: The results obtained with Milstein's scheme and the order 3/2 strong Taylor scheme. We see that the Hamiltonian tends to blow up despite using small time steps.  \emph{Bottom}: The results obtained with the integrators derived in Section~\ref{sec:General stochastic Hamiltonian}. For comparison, the solution obtained with the Taylor scheme for $\Delta t=0.05$ is also included. Note that for clarity the same color code is applied when the plots for some integrators overlap very closely.}
		\label{fig: Hamiltonian for Kubo Oscillator}
\end{figure}

\subsubsection{Anharmonic oscillator}
\label{sec:Anharmonic oscillator}
In general the Hamiltonian $H(q,p)$ does not stay constant for stochastic Hamilton equations. To determine how well our integrators perform in such cases we considered the anharmonic oscillator defined by $H(q,p)=p^2/2+\gamma q^4$ and $h(q)=\beta q$, where $\beta$ is the noise intensity and $\gamma$ is a parameter. One can calculate the expected value of the Hamiltonian analytically as

\begin{equation}
\label{eq:Expected value of the Hamiltonian for the anharmonic oscillator}
E\Big( H\big( q(t),p(t) \big) \Big) = H\big( q_0,p_0 \big) + \frac{\beta^2}{2}t,
\end{equation}

\noindent
that is, the mean value of the Hamiltonian grows linearly in time (see \cite{SeesselbergParticleStorageRings}). Simulations with the initial conditions $q_0=0$, $p_0=1$, the parameter $\gamma=0.1$, and the noise intensity $\beta=0.1$ were carried out until the time $T=784$ (approximately 100 periods of the oscillator in the absence of noise). In each case 10,000 sample paths were generated. The numerical value of the mean Hamiltonian $E(H)$ as a function of time for each of the integrators is depicted in Figure~\ref{fig: Hamiltonian for Anharmonic Oscillator}. We see that the variational integrators accurately capture the linear growth of $E(H)$, whereas the Taylor scheme fails to reproduce that behavior even when a smaller time step is used. It is worth noting that the integrators $P1N1Q1RecN2Q2Lob$ and $P1N1Q1RecN1Q2Gau$ yield a very accurate solution, while being computationally efficient, as discussed in Section~\ref{sec:Stochastic Hamiltonian independent of momentum}.

\begin{figure}
	\centering
		\includegraphics[width=0.9\textwidth]{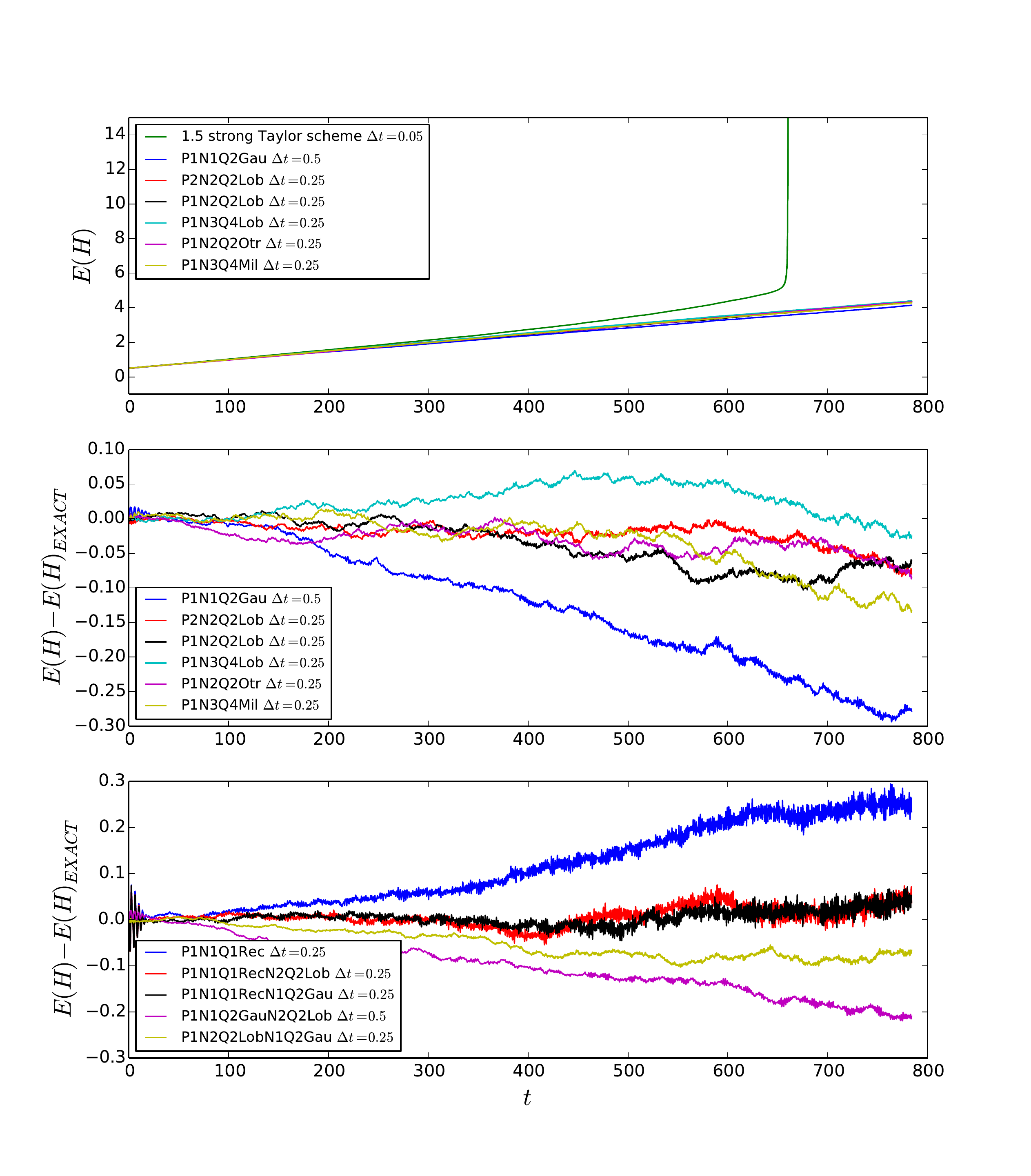}
		\caption{ \emph{Top:} The numerical value of the mean Hamiltonian $E(H)$ for the simulations of the anharmonic oscillator with the initial conditions $q_0=0$, $p_0=1$, the parameter $\gamma=0.1$, and the noise intensity $\beta=0.1$ is shown for the solutions computed with the order 3/2 strong Taylor scheme using the time step $\Delta t=0.05$ and the variational integrators derived in Section~\ref{sec:General stochastic Hamiltonian} using the time step $\Delta t=0.25$ or $\Delta t=0.5$. The variational integrators accurately capture the linear growth of $E(H)$, whereas the Taylor scheme fails to reproduce that behavior. \emph{Middle:} The difference between the numerical value of the mean Hamiltonian $E(H)$ and the exact value \eqref{eq:Expected value of the Hamiltonian for the anharmonic oscillator} is shown for the integrators derived in Section~\ref{sec:General stochastic Hamiltonian}. \emph{Bottom:} Same for the integrators derived in Section~\ref{sec:Stochastic Hamiltonian independent of momentum}. The integrators $P1N1Q1RecN2Q2Lob$ and $P1N1Q1RecN1Q2Gau$ prove to be particularly accurate, while having a low computational cost.}
		\label{fig: Hamiltonian for Anharmonic Oscillator}
\end{figure}

\paragraph{Remark.} One can verify by a direct calculation that when the $P2N2Q2Otr$ integrator (example~6 in Section~\ref{sec:General stochastic Hamiltonian}) is applied to the Kubo oscillator, then the corresponding system of equations \eqref{eq:SPRK for stochastic Hamiltonian systems} does not have a solution when $\Delta t + \beta \Delta W = 3$. To avoid numerical difficulties, one could in principle use the truncated increments \eqref{eq:Truncated Wiener increments} with, e.g., $A=(3-\Delta t)/(2\beta)$ (for $\Delta t <3$). However, given the negligible probability that $|\Delta W|>A$ for the parameters used in Section~\ref{sec:Kubo oscillator convergence} and Section~\ref{sec:Kubo oscillator energy}, we did not observe any numerical issues, even though we did not use truncated increments. In the case of all the other numerical experiments presented in Section~4, the applied algorithms either turned out to be explicit, or the corresponding nonlinear systems of equations had solutions for all values of $\Delta W$. Nonlinear equations were solved using Newton's method and the previous time step values of the position $q_k$ and momentum $p_k$ were used as initial guesses.

%%%%%%%%%%%%%%%%%%%%%%%%%%%%%%%%%%%%%%%%%%%%%%%%%%%%%%%%%%%%%%%%%%%%%%%%%%%%%%%%%%%%
%  NUMERICAL EXPERIMENTS
%%%%%%%%%%%%%%%%%%%%%%%%%%%%%%%%%%%%%%%%%%%%%%%%%%%%%%%%%%%%%%%%%%%%%%%%%%%%%%%%%%%%
\section{Summary}
\label{sec:Summary}
In this paper we have presented a general framework for constructing a new class of stochastic symplectic integrators for stochastic Hamiltonian systems. We generalized the approach of Galerkin variational integrators introduced in \cite{LeokZhang}, \cite{MarsdenWestVarInt}, \cite{OberBlobaum2015} to the stochastic case, following the ideas underlying the stochastic variational integrators introduced in \cite{BouRabeeSVI}. The solution of the stochastic Hamiltonian system was approximated by a polynomial of degree $s$, and the action functional was approximated by a quadrature formula based on $r$ quadrature points. We showed that the resulting integrators are symplectic, preserve integrals of motion related to Lie group symmetries, and include stochastic symplectic Runge-Kutta methods introduced in \cite{MaDing2012}, \cite{MaDing2015}, \cite{MilsteinRepin} as a special case when $r=s$. We pointed out several new low-stage stochastic symplectic methods of mean-square order 1.0 for systems driven by a one-dimensional noise, both for the case of a general Hamiltonian function $h=h(q,p)$ and a Hamiltonian function $h=h(q)$ independent of $p$, and demonstrated their superior long-time numerical stability and energy behavior via numerical experiments. We also stated the conditions under which these integrators retain their first order of convergence when applied to systems driven by a multidimensional noise.

Our work can be extended in several ways. In Section~\ref{sec:Methods of order 3/2} we indicated how higher-order stochastic variational integrators can be constructed and showed that a type of stochastic symplectic partitioned Runge-Kutta methods of mean-square order 3/2 considered in \cite{MilsteinRepin} can be recast in that formalism. It would be interesting to derive new stochastic integrators of order 3/2 by choosing appropriate values for the parameters in \eqref{eq:Discrete Hamiltonian with dZ} or \eqref{eq:Discrete Hamiltonian with dZ for separable Hamiltonians}. It would also be interesting to apply the Galerkin approach to construct stochastic variational integrators for constrained (see \cite{BouRabeeConstrainedSVI}) and dissipative (see \cite{BouRabeeOwhadi2010}) stochastic Hamiltonian systems, and systems defined on Lie groups (see \cite{LeokShingel}). Another important problem would be stochastic variational error analysis. That is, rather than considering how closely the numerical solution follows the exact trajectory of the system, one could investigate how closely the discrete Hamiltonian matches the exact generating function. In the deterministic setting these two notions of the order of convergence are equivalent (see \cite{MarsdenWestVarInt}). It would be instructive to know if a similar result holds in the stochastic case. A further vital task would be to develop higher-order weakly convergent stochastic variational integrators. As mentioned in Section~\ref{sec:Construction of the integrator} and Section~\ref{sec:Methods of order 3/2}, higher-order methods require inclusion of higher-order multiple Stratonovich integrals, which are cumbersome to simulate in practice. In many cases, though, one is only interested in calculating the probability distribution of the solution rather than precisely approximating each sample path. In such cases weakly convergent methods are much easier to use (see \cite{KloedenPlatenSDE}, \cite{MilsteinBook}). Finally, one may extend the idea of variational integration to stochastic multisymplectic partial differential equations such as the stochastic Korteweg-de Vries, Camassa-Holm or Hunter-Saxton equations. Theoretical groundwork for such numerical schemes has been recently presented in \cite{HolmTyranowskiVirasoro}.

%%%%%%%%%%%%%%%%%%%%%%%%%%%%%%%%%%%%%%%%%%%%%%%%%%%%%%%%%%%%%%%%%%%%%%%%%%%%%%%%%%%%
%  ACKNOWLEDGMENTS
%%%%%%%%%%%%%%%%%%%%%%%%%%%%%%%%%%%%%%%%%%%%%%%%%%%%%%%%%%%%%%%%%%%%%%%%%%%%%%%%%%%%
\section*{Acknowledgments}
We would like to thank Nawaf Bou-Rabee, Mickael Chekroun, Dan Crisan, Nader Ganaba, Melvin Leok, Juan-Pablo Ortega, Houman Owhadi, and Wei Pan for useful comments and references. This work was partially supported by the European Research Council Advanced Grant 267382 FCCA. Parts of this project were completed while the authors were visiting the Institute for Mathematical Sciences, National University of Singapore in 2016.

%%%%%%%%%%%%%%%%%%%%%%%%%%%%%%%%%%%%%%%%%%%%%%%%%%%%%%%%%%%%%%%%%%%%%%%%%%%%%%%%%%%%
%  BIBLIOGRAPHY
%%%%%%%%%%%%%%%%%%%%%%%%%%%%%%%%%%%%%%%%%%%%%%%%%%%%%%%%%%%%%%%%%%%%%%%%%%%%%%%%%%%%

%\bibliographystyle{abbrv}
%\bibliography{bibliography}

\end{document}